\documentclass[11pt,a4paper]{article}
\usepackage[margin=25mm,top=24mm,bottom=25mm]{geometry}
\usepackage[T1]{fontenc}
\usepackage[utf8]{inputenc}
\usepackage{lmodern}
\usepackage{amsmath,amssymb,amsthm,mathtools,bm}
\usepackage{booktabs,array,longtable,enumitem,microtype}
\usepackage{tikz}
\usetikzlibrary{arrows.meta,positioning,calc}
\usepackage[unicode,colorlinks=true,linkcolor=black,citecolor=black,urlcolor=blue!45!black]{hyperref}
\usepackage{bookmark}
\hypersetup{pdftitle={Local Shellings of Broken-Circuit Complexes via Boundary States and the Chromatic Specialization of the Negami Polynomial},pdfauthor={IWAO MIZUKAI},pdfsubject={Boundary gluing, relative shellings, Orlik--Terao ideals, and state polytopes}}
\theoremstyle{plain}
\newtheorem{theorem}{Theorem}[section]
\newtheorem{lemma}[theorem]{Lemma}
\newtheorem{proposition}[theorem]{Proposition}
\newtheorem{corollary}[theorem]{Corollary}
\theoremstyle{definition}
\newtheorem{definition}[theorem]{Definition}
\newtheorem{example}[theorem]{Example}
\newtheorem{remark}[theorem]{Remark}
\newcommand{\BC}{\operatorname{BC}}
\newcommand{\NBC}{\operatorname{NBC}}
\newcommand{\Hilb}{\operatorname{Hilb}}
\newcommand{\conv}{\operatorname{conv}}
\newcommand{\lk}{\operatorname{lk}}
\newcommand{\inid}{\operatorname{in}}

\newcommand{\Bell}{\operatorname{Bell}}

\newcommand{\QQ}{\mathbb Q}
\newcommand{\RR}{\mathbb R}
\newcommand{\ZZ}{\mathbb Z}
\newcommand{\one}{\mathbf1}
\newcommand{\Sc}{\mathcal S}

\newcommand{\Gc}{\mathcal G}

\numberwithin{equation}{section}
\setlist{nosep,leftmargin=2em,topsep=5pt}
\allowdisplaybreaks[2]
\title{\bfseries Local Shellings of Broken-Circuit Complexes\\
via Boundary States and the Chromatic\\
Specialization of the Negami Polynomial}
\author{IWAO MIZUKAI\\[4pt]\small Chiba Keizai University Affiliated High School, Japan}
\date{}
\begin{document}
\maketitle
\begin{abstract}
For edge-disjoint graphs meeting along a finite boundary, we construct relative shellings of NBC faces with prescribed boundary connectivity. The construction adds paths in spanning forests to the restriction faces of auxiliary rooted-forest complexes. When every edge of the first graph precedes every edge of the second, these relative shellings combine into a shelling with consecutive connectivity blocks. The resulting nonnegative decomposition of the $h$-polynomial realizes the chromatic specialization of the Negami splitting identity. We also give path formulas for gluing graphic Orlik--Terao Gr\"obner bases across boundaries of size two or three, and a virtual-clique elimination formula for arbitrary boundaries. An exponential output lower bound at treewidth two distinguishes explicit basis enumeration from coefficient computation at fixed treewidth. Finally, state polytopes and their Cayley lifts describe an obstruction to linear block orders in the original coordinates and a lifted realization of these orders.
\end{abstract}
\noindent\textbf{Keywords:} Negami polynomial; broken-circuit complex; relative shelling; boundary partition; Orlik--Terao ideal; Gr\"obner basis; Cayley polytope.

\section{Introduction and context}
The Negami polynomial is a graph invariant equipped with an edge-subset expansion and vertex-splitting identities \cite{Negami,NegamiKawagoe}. Its chromatic specialization is related to Whitney's broken-circuit expansion and determines the face numbers and the $h$-polynomial of a broken-circuit complex. The purpose of this paper is to interpret the individual terms of the splitting identity through restriction faces of local relative shellings.

The shellability of broken-circuit complexes and their lexicographic shellings are classical \cite{Bjorner,Ziegler}. The theorem that circuit polynomials form a universal Gr\"obner basis, with degeneration to the broken-circuit ring, is due to Proudfoot and Speyer \cite{ProudfootSpeyer}. We use these results as foundational input. Modular decompositions of Orlik--Terao algebras \cite{Denham}, the Cayley construction \cite{Huber}, and polynomial evaluation on graphs of bounded treewidth \cite{Noble} also belong to established theory. Our aim is to give a unified, explicit formulation in terms of boundary states, with the necessary hypotheses and proofs, rather than to claim that every ingredient is independently new.

We distinguish three questions:
\begin{enumerate}
\item Can local information determine which faces are NBC for a prescribed edge order?
\item Can facets be placed in consecutive state blocks so that their new faces form shelling intervals?
\item Can polynomial coefficients be computed without enumerating all facets or an entire Gr\"obner basis?
\end{enumerate}
These are different assertions. In particular, equality of initial ideals or Hilbert series alone does not imply a shelling with prescribed state blocks.

Our principal results are the relative shelling for a fixed state (Theorem~\ref{thm:relative}), a one-step block construction for an arbitrary boundary (Theorem~\ref{thm:block}), its relation to the Negami splitting identity (Theorem~\ref{thm:negami-h}), elimination using a virtual complete graph (Theorem~\ref{thm:clique}), and an exponential output example at treewidth two (Theorem~\ref{thm:exponential}). The block construction requires an edge-order hypothesis. Although we state and prove the three-variable splitting identity, the nonnegative interpretation by restriction faces concerns its chromatic specialization.

\section{Ordered graphs, broken-circuit complexes, and relative \texorpdfstring{$h$}{h}-polynomials}
All graphs are finite, and edges retain their labels. The original graphs are loopless; quotient graphs may have loops and parallel edges. Fix a total order $<$ on the edge set. A pair of parallel edges is regarded as a circuit.

\begin{definition}
The \emph{broken circuit} associated with a circuit $C$ is $C\setminus\{\max C\}$. An edge set containing no broken circuit is an \emph{NBC face}; the collection of all such faces is denoted by $\BC(G)$. If $G$ has a loop, the empty set is a broken circuit, and $\BC(G)$ is the void complex, with no faces.
\end{definition}
The convention of deleting the smallest edge, often used in the literature, is obtained by reversing the edge order. We delete the largest edge throughout. Every NBC face is a forest. For a connected loopless graph, $\BC(G)$ is pure of rank $r=|V(G)|-1$, and its facets are the NBC spanning trees.

\begin{lemma}[Sequential connectivity criterion]\label{lem:criterion}
Write $A_{<e}=\{a\in A:a<e\}$. Then
\begin{equation}\label{eq:criterion}
A\in\BC(G)\quad\Longleftrightarrow\quad
\text{for every edge $e=xy$, $x$ and $y$ are disconnected in $(V(G),A_{<e})$.}
\end{equation}
\end{lemma}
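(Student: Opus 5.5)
The plan is to prove both implications of \eqref{eq:criterion} directly from the definition of a broken circuit, by translating in each direction between a broken circuit contained in $A$ and a path in $(V(G),A_{<e})$. The one fact used throughout is that in a graph (loops and parallel pairs allowed), the circuits are exactly the edge sets of simple cycles. That is, a circuit is either a loop, or a pair of parallel edges, or the union of a simple path $P$ from $x$ to $y$ with an edge $e=xy\notin P$.

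For ($\Rightarrow$) I argue by contraposition. Suppose some edge $e=xy$ has $x$ and $y$ connected in $(V(G),A_{<e})$.
\begin{itemize}
\item If $e$ is a loop ($x=y$), then the empty set is a broken circuit and is contained in $A$. So $A\notin\BC(G)$, which agrees with the convention that $\BC(G)$ is void.
\item Otherwise, choose a simple path $P\subseteq A_{<e}$ from $x$ to $y$. Since every edge of $P$ is $<e$, we have $e\notin P$, and $C=P\cup\{e\}$ is a circuit; when $|P|=1$ it is a parallel pair. Every element of $P$ is smaller than $e$, so $\max C=e$. Hence $C\setminus\{\max C\}=P\subseteq A$ is a broken circuit inside $A$, and $A$ is not NBC.
\end{itemize}

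For ($\Leftarrow$), again by contraposition, suppose $A$ contains a broken circuit $C\setminus\{e\}$ with $e=\max C$.
\begin{itemize}
\item If $C=\{e\}$ is a loop $xx$, then $x$ is trivially connected to itself in $(V(G),A_{<e})$, so the right-hand side fails.
\item Otherwise, $C\setminus\{e\}$ is a simple path between the endpoints $x,y$ of $e$. All its edges are strictly smaller than $e$ and lie in $A$, so the path lies in $A_{<e}$. Thus $x$ and $y$ are connected in $(V(G),A_{<e})$, and the right-hand side fails for this $e$.
\end{itemize}

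Two bookkeeping points need care. First, the edge $e$ ranges over all edges of $G$, not only those in $A$. An edge of $A$ is allowed, because $A_{<e}$ excludes $e$ itself; an edge outside $A$ is needed, because broken circuits complete to circuits through edges that need not lie in $A$. Second, the degenerate cases (loops, and parallel edges regarded as circuits) must match the stated conventions, which the case splits above handle. I expect no genuine obstacle. The only subtle point is making sure that "circuit" is read so as to include these degenerate cycles, so that the path-plus-edge correspondence is exact in quotient graphs.
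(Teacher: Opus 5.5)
Your proof is correct and follows the paper's argument. In both directions you pass between a path $P\subseteq A_{<e}$ joining the endpoints of $e$ and the broken circuit $P=(P\cup\{e\})\setminus\{\max\}$, and you treat a loop as a path of length zero; your separate handling of parallel pairs only spells out what the paper's three-line proof leaves implicit.
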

\begin{proof}
If a path $P\subseteq A_{<e}$ joins $x$ to $y$, then $e$ is the largest edge of $P\cup\{e\}$, so $P$ is a broken circuit. Conversely, a broken circuit is a path joining the endpoints of the deleted largest edge using only smaller edges. A loop gives the same criterion with a path of length zero.
\end{proof}

For a pure complex $\Delta$ whose facets have cardinality $r$, set
\begin{equation}\label{eq:h}
h_\Delta(z)=\sum_{A\in\Delta}z^{|A|}(1-z)^{r-|A|}.
\end{equation}
For a relative complex $\Delta\setminus\Gamma$, take the sum over its faces using the same rank $r$. A \emph{shelling} is an ordering $F_1,\ldots,F_s$ of the facets such that, at each step, the new faces form a single Boolean interval:
\[
\{A\subseteq F_j:A\text{ has not appeared earlier}\}=[R(F_j),F_j].
\]
For a relative complex, all faces of $\Gamma$ are considered to have appeared before the first step. The face $R(F_j)$ is the \emph{restriction face}. Then
\begin{equation}\label{eq:h-shell}
h_\Delta(z)=\sum_j z^{|R(F_j)|},
\end{equation}
with the analogous formula for a relative complex: the contribution of each interval to \eqref{eq:h} is $z^{|R(F_j)|}$.

For a connected graph, write $H_G=h_{\BC(G)}$. Set $H_G=0$ for a quotient graph with a loop, and $H_G=1$ for the graph consisting of a single vertex and no edges.

\begin{proposition}[Classical shellability theorem]\label{prop:known-shell}
The broken-circuit complex of a loopless matroid is shellable. Under our convention, the lexicographic facet order that places first the facet containing the largest element of the symmetric difference is a shelling. Links of faces in a pure shellable complex are also shellable.
\end{proposition}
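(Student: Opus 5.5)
Since Proposition~\ref{prop:known-shell} is classical, I plan a self-contained verification in the present conventions rather than a new argument. It has three parts: the lexicographic order is a shelling, the restriction faces are explicit, and links inherit shellability.

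Fix a loopless matroid $M$ on a totally ordered ground set. The facets of $\BC(M)$ are the NBC bases; they all have rank $r$, since the smallest-to-largest greedy construction builds an NBC basis containing any NBC face. Order the facets so that $F$ precedes $F'$ exactly when the largest element of $F\triangle F'$ lies in $F$; this is the reverse-lexicographic comparison from the top. For a facet $F$, define $R(F)$ to be the set of $f\in F$ for which some $e\notin F$ with $e<f$ makes $(F\setminus f)\cup e$ an NBC basis. Equivalently, $f$ is not the smallest element of the fundamental cocircuit $C^*(F,f)$ among elements that keep the exchange NBC.

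\textbf{Step 1: every face $A\subseteq F$ not containing $R(F)$ appeared earlier.} Choose $f\in R(F)\setminus A$ with its witness $e<f$. Then $F'=(F\setminus f)\cup e$ is an NBC basis containing $A$. The largest element of $F\triangle F'=\{e,f\}$ is $f\in F$, so by our convention $F'$ precedes $F$. This gives the required earlier facet.

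\textbf{Step 2: no face containing $R(F)$ appeared earlier.} This is the main obstacle. Suppose $A\supseteq R(F)$ lies in an earlier facet $G$. Let $g$ be the largest element of $F\triangle G$, so $g\in G\setminus F$.
\begin{itemize}
\item I will use the fundamental circuit $C(F,g)$. The candidate element is $f=\max\bigl(C(F,g)\setminus g\bigr)$, which lies in $F\setminus G$ since $G$ is independent.
\item I must show $f\in R(F)$, via some exchange producing a smaller NBC basis. This would contradict $A\supseteq R(F)$, because $f\notin G\supseteq A$.
\item The delicate point is that exchanges can create broken circuits. Here Lemma~\ref{lem:criterion} helps: an exchange $(F\setminus f)\cup e$ stays NBC exactly when the connectivity condition holds below each edge, and this can be checked locally. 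In the graphic case this suffices for the present paper.
\item In general I would fall back on the standard argument of Bj\"orner \cite{Bjorner}. It identifies the lexicographic order as the restriction of the (co)lexicographic shelling of the independence complex. It uses that $\BC(M)$ is a cone-like subcomplex compatible with that order, since NBC bases are exactly the bases with no internal activity relative to the reversed convention.
\end{itemize}
If the direct exchange argument becomes messy, I will simply cite \cite{Bjorner,Ziegler} and only check that reversing the edge order translates their ``delete the smallest element'' convention into ours.

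\textbf{Links.} For a face $A$ of a pure shellable complex with shelling $F_1,\dots,F_s$, take the facets containing $A$ in the induced order. The sets $F_j\setminus A$ then shell $\lk A$, with restriction faces $R(F_j)\setminus A$. To check this, a new face $B\subseteq F_j\setminus A$ of the link corresponds to $B\cup A$. The face $B\cup A$ is new in $\Delta$ iff it contains $R(F_j)$. The only subtlety is that an earlier facet containing $B\cup A$ automatically contains $A$, so novelty in $\Delta$ and in $\lk A$ coincide. This gives the interval $[R(F_j)\setminus A,\,F_j\setminus A]$.
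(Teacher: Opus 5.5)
Your argument for links is the paper's own: keep the facets containing the face in their original order, delete the face, and restrict the intervals. Your fallback of citing \cite{Bjorner,Ziegler} is also all the paper does for the first two assertions. So if you simply cite, your proposal coincides with the paper's proof.

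The self-contained verification you sketch, however, fails as written. You adopted the paper's delete-the-largest order but kept the exchange directions of the delete-the-smallest convention. In Step~1 you take a witness $e<f$ and conclude that $F'=(F\setminus f)\cup e$ precedes $F$ because $\max(F\triangle F')=f\in F$. By the convention you stated, that means $F$ precedes $F'$, so $F'$ is not an earlier witness. For a triangle $e_1<e_2<e_3$ the order is $\{e_2,e_3\},\{e_1,e_3\}$, with restriction faces $\varnothing,\{e_1\}$. Your definition instead gives $\{e_2\},\varnothing$, which are the restriction faces of the opposite order. The correct restriction face consists of those $f\in F$ admitting an NBC exchange with a \emph{larger} $e$. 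Equivalently, these are the $f$ that are not the maximum of the fundamental cocircuit $C^*(F,f)$. This is the description used in the proof of Proposition~\ref{prop:restriction-mixed}.

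Step~2 also fails. Your candidate $f=\max(C(F,g)\setminus g)$ lies in $G$, not in $F\setminus G$. Since $F$ is NBC, $g$ is not the maximum of $C(F,g)$, so $f=\max C(F,g)>g$. Every element of $F$ above $g=\max(F\triangle G)$ belongs to $G$. Independence of $G$ only shows that \emph{some} element of $C(F,g)\setminus g$ avoids $G$.

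The missing idea is an exchange lemma: if $F$ is an NBC basis and $e=\max C^*(F,f)\ne f$, then $(F\setminus f)\cup e$ is NBC. Suppose a circuit $C$ satisfies $C\setminus\max C\subseteq(F\setminus f)\cup e$. Then $C$ must contain $e$, since otherwise $F$ would contain $C$ or a broken circuit. Hence $C\subseteq(F\setminus f)\cup\{e,\max C\}$. Orthogonality $|C\cap C^*(F,f)|\ne1$ then forces $\max C\in C^*(F,f)$. But $\max C>e$, contradicting $e=\max C^*(F,f)$.

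With this lemma, take any $f\in C(F,g)\setminus G$. Then $f<g$ and $g\in C^*(F,f)$, so $\max C^*(F,f)\ge g>f$. This gives $f\in R(F)\setminus G$, contradicting $R(F)\subseteq A\subseteq G$. The same lemma repairs Step~1 once the witness is taken with $e>f$.

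Two smaller slips have the same source:
\begin{enumerate}
\item Extending an NBC face $A$ must add $\max(E\setminus\operatorname{cl}A)$. A smallest-first greedy produces the broken circuit $\{e_1,e_2\}$ in the triangle.
\item NBC bases are the bases with no \emph{external} activity, meaning no $e\notin F$ is the maximum of $C(F,e)$. They are not characterised by having no internal activity.
\end{enumerate}
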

The first assertion follows from \cite{Bjorner,Ziegler}. For the second, retain, in their original order, the facets containing the specified face and delete that face from each of them. Restricting the original shelling intervals verifies the assertion.

\section{Relative shellings with a fixed boundary state}
Throughout this section, $G$ is connected and loopless, and $\varnothing\ne B\subseteq V(G)$ is a boundary. Let $\Pi(B)$ denote the set of partitions of $B$, ordered by refinement: $\pi\le\sigma$ means that $\pi$ refines $\sigma$. Write $\pi(A)$ for the connectivity partition induced on $B$ by an edge set $A$. In general,
\[
\Sc_\pi=\{A\in\BC(G):\pi(A)=\pi\}
\]
is not a subcomplex. Removing an edge from a path between boundary vertices can change the state.

Choose one representative $s_C$ from each block $C\in\pi$, and put $S=\{s_C:C\in\pi\}$. The collection
\begin{equation}\label{eq:DS}
D_S=\{A\in\BC(G):\text{distinct vertices of $S$ lie in distinct components of $A$}\}
\end{equation}
is a subcomplex.

\begin{lemma}[Link description using a virtual tree]\label{lem:link}
Add a tree $Q$ on $S$, placing all its edges after the original edges. Take $Q=\varnothing$ when $|S|=1$. Then
\begin{equation}\label{eq:link}
D_S=\lk_{\BC(G+Q)}Q.
\end{equation}
Consequently, $D_S$ is pure and shellable of rank $|V(G)|-|S|$. Its facets are NBC spanning forests with $|S|$ components, each containing exactly one vertex of $S$.
\end{lemma}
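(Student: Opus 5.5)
The plan is to prove \eqref{eq:link} directly from the sequential connectivity criterion, Lemma~\ref{lem:criterion}, and then read off purity, shellability and the facet description from Proposition~\ref{prop:known-shell}. If $|S|=1$, then $Q=\varnothing$ and the separation condition in \eqref{eq:DS} is vacuous, so $D_S=\BC(G)=\lk_{\BC(G)}\varnothing$. Assume $|S|\ge 2$. First I will check that $Q$ is itself NBC in $G+Q$. Its edges are pairwise incomparable with the original edges only through the order, and every edge $q=xy$ of $Q$ satisfies the criterion for $Q$: within the tree $Q$, its endpoints are not joined by $Q_{<q}$, and the original edges are not in $Q$. So $Q\in\BC(G+Q)$ and the link makes sense.

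Next I will show that for $A\subseteq E(G)$, $A\cup Q\in\BC(G+Q)$ if and only if $A\in D_S$. Since every $G+Q$ face of the link is disjoint from $Q$ and contains only original edges, this suffices.

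For an original edge $e$, we have $(A\cup Q)_{<e}=A_{<e}$ because $Q$ comes last. The criterion for $e$ in $G+Q$ is therefore exactly the criterion for $e$ in $G$. So $A\cup Q$ satisfies the criterion at all original edges iff $A\in\BC(G)$.

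For $q=xy\in Q$, we have $(A\cup Q)_{<q}=A\cup Q_{<q}$. Now suppose $A\in\BC(G)$ separates the vertices of $S$. Contracting the components of $A$ leaves distinct vertices of $S$ distinct, and $Q$ is a tree on them. Hence $x$ and $y$ cannot be joined through $A\cup Q_{<q}$: such a path would, after contraction, give a cycle in $Q_{<q}\cup\{q\}$.

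Conversely, suppose two vertices $s\neq s'$ of $S$ lie in one component of $A$. Take the path in $Q$ from $s$ to $s'$ and let $q$ be its largest edge. The rest of that $Q$-path, together with an $A$-path from $s$ to $s'$, gives a walk in $A\cup Q_{<q}$ joining the endpoints of $q$. This violates the criterion. Edges $e\notin A\cup Q$ behave the same way, since the criterion ranges over all edges of $G+Q$, and that case is already covered above. This proves \eqref{eq:link}.

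For the consequences, $G+Q$ is connected and loopless; it may acquire parallel edges, but $Q$ itself has none. By Proposition~\ref{prop:known-shell}, $\BC(G+Q)$ is pure shellable of rank $|V(G)|-1$, and so is the link of $Q$, with rank $|V(G)|-1-(|S|-1)=|V(G)|-|S|$. A facet $F$ of the link satisfies $F\cup Q$ is an NBC spanning tree of $G+Q$. Deleting the $|S|-1$ edges of $Q$ therefore leaves a spanning forest with $|S|$ components. Each component contains at least one vertex of $S$ because $F\cup Q$ is connected, and at most one by the separation property, so exactly one.

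The main obstacle is the converse direction of the $Q$-edge criterion: one must choose the edge $q$ as the maximum on the $Q$-path so that the remaining $Q$-edges are all smaller than $q$. Once that choice is made, the rest is bookkeeping.
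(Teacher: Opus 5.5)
Your proof is correct and follows essentially the paper's route. Because $Q$ comes last, $(A\cup Q)_{<e}=A_{<e}$ for original edges; the $Q$-edges are then handled by the separation property; and purity, shellability and the facet description are read off from Proposition~\ref{prop:known-shell}. The only difference is cosmetic: the paper settles both directions of the $Q$-edge check by asking whether $A\cup Q$ is a forest, whereas you use a contraction argument and exhibit the violating edge explicitly as the largest $Q$-edge on the path between two representatives.
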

\begin{proof}
If $A\in D_S$, then $A\cup Q$ is a forest. The edges of $Q$ do not enter the criterion \eqref{eq:criterion} for an original edge, since they are larger. The criterion also holds for the edges of $Q$, because $A\cup Q$ is a forest. Hence $A\cup Q\in\BC(G+Q)$.

Conversely, if $A\cup Q$ is NBC, then so is $A$. If a component of $A$ contained two representatives, adding the edges of the tree $Q$ would create a cycle. Thus $A$ separates $S$, proving \eqref{eq:link}. Since $G+Q$ is connected and $|Q|=|S|-1$, the link has rank $|V(G)|-|S|$. Its facets have as many components as representatives, which proves the final assertion.
\end{proof}

\begin{theorem}[Relative shelling for a boundary state]\label{thm:relative}
Let $n=|V(G)|$ and $r_\pi=n-|\pi|$. If $\Sc_\pi$ is nonempty, it is a pure relative complex of rank $r_\pi$ and admits a relative shelling. More precisely, choose any shelling of $D_S$ and retain only its facets $F$ with state $\pi$. Let $P_\pi(F)$ be the union, over all $C\in\pi$ and $b\in C$, of the unique paths in $F$ from $s_C$ to $b$. The restriction face in the relative shelling is
\begin{equation}\label{eq:relative-R}
R_\pi(F)=R_D(F)\cup P_\pi(F).
\end{equation}
In particular,
\begin{equation}\label{eq:J-state}
J_{G,\pi}(z):=\sum_{A\in\Sc_\pi}z^{|A|}(1-z)^{r_\pi-|A|}
=\sum_F z^{|R_D(F)\cup P_\pi(F)|}\in\ZZ_{\ge0}[z].
\end{equation}
For an empty state, set $J_{G,\pi}=0$. No block hypothesis on the edge order is required.
\end{theorem}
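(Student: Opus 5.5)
The plan is to realize $\Sc_\pi$ as a difference of two subcomplexes of $D_S$ and then to cut down the intervals of a shelling of $D_S$, using Lemma~\ref{lem:link} throughout. The one structural fact everything rests on is the following claim.

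\emph{Claim.} If $A\in\Sc_\pi$, then every facet $F$ of $D_S$ with $A\subseteq F$ satisfies $\pi(F)=\pi$.

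To see this, note first that $A\in D_S$. Indeed, distinct representatives lie in distinct blocks of $\pi=\pi(A)$, hence in distinct components of $A$. Now let $F\supseteq A$ be a facet of $D_S$. Adding edges only coarsens the induced partition, so $\pi(F)\ge\pi$. On the other hand, by Lemma~\ref{lem:link} each component of $F$ contains exactly one vertex of $S$. Every block of $\pi(F)$ is a union of blocks of $\pi$, and each block of $\pi$ carries its own representative. Hence no two blocks of $\pi$ can be merged, and $\pi(F)=\pi$. Since $D_S$ is pure (Lemma~\ref{lem:link}), such an $F$ always exists.

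Let $\Phi_\pi$ denote the set of facets of $D_S$ of state $\pi$. We use it in three steps.

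First, for $F\in\Phi_\pi$ and $A\subseteq F$, we have $\pi(A)=\pi$ if and only if $P_\pi(F)\subseteq A$. Because $F$ is a forest, the path in $F$ from $s_C$ to $b$ is unique. A subforest $A$ connects $s_C$ to $b$ exactly when it contains that path. Conversely, $A$ cannot connect vertices in different blocks, since $F$ does not.

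Second, let $\Delta$ be the complex generated by $\Phi_\pi$, and let $\Gamma=\{A\in\Delta:\pi(A)\ne\pi\}$. Every face of $\Delta$ satisfies $\pi(A)\le\pi$. So $\Gamma$ consists of the faces with $\pi(A)<\pi$, and this condition is inherited by subsets; hence $\Gamma$ is a subcomplex. By the claim, $\Sc_\pi=\Delta\setminus\Gamma$, which is pure of rank $|F|=n-|S|=r_\pi$.

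Third, we order $\Phi_\pi$ as in the chosen shelling of $D_S$. At the step for $F$, a subset $A\subseteq F$ is new in the relative sense when two conditions hold. It must not lie in $\Gamma$, which by the first step means $A\supseteq P_\pi(F)$. It must also not lie in an earlier facet of $\Phi_\pi$. By the claim, any earlier facet of $D_S$ containing such an $A$ already belongs to $\Phi_\pi$. So for these $A$ the second condition is equivalent to being new in the shelling of $D_S$, that is, $A\in[R_D(F),F]$. The new faces therefore form
\[
[R_D(F),F]\cap[P_\pi(F),F]=[R_D(F)\cup P_\pi(F),F],
\]
a single Boolean interval, which proves \eqref{eq:relative-R}. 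Formula \eqref{eq:J-state} then follows from \eqref{eq:h-shell} for relative complexes. The argument uses only the given shelling of $D_S$, so no hypothesis on the edge order is needed.

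The main obstacle is the claim, which requires exactly one representative per component and keeps the two notions of "earlier" from diverging; I would state it first as a separate step.
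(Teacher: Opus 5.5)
Your proof is correct and follows the paper's argument. Your Claim is the paper's observation that $\Sc_\pi$ is upward closed in $D_S$, so facets of other states contain no face of $\Sc_\pi$ and can be dropped without changing which faces count as earlier; your path characterization $\pi(A)=\pi\iff P_\pi(F)\subseteq A$ then gives $[R_D(F),F]\cap\Sc_\pi=[R_D(F)\cup P_\pi(F),F]$ exactly as in the paper, and the only cosmetic difference is that you write $\Sc_\pi=\Delta\setminus\Gamma$ with $\Delta$ generated by the state-$\pi$ facets, whereas the paper uses the pair $D_S\supseteq D_S\setminus\Sc_\pi$, which yields the same face set and the same notion of newness.
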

\begin{proof}
The set $\Sc_\pi$ is upward closed in $D_S$. Indeed, adding edges to a face of state $\pi$ can merge two boundary blocks only by connecting their representatives, which would leave $D_S$. Therefore $D_S\setminus\Sc_\pi$ is a subcomplex. Extending a face of $\Sc_\pi$ to a facet of $D_S$ preserves its state, proving relative purity.

A subset $A$ of a forest $F$ of state $\pi$ has the same state if and only if $P_\pi(F)\subseteq A$. Hence each original shelling interval satisfies
\[
[R_D(F),F]\cap\Sc_\pi=[R_D(F)\cup P_\pi(F),F].
\]
A facet of $D_S$ of another state contains no face in $\Sc_\pi$. Deleting those facets therefore does not affect which relative faces have appeared earlier. This proves the relative shelling and \eqref{eq:J-state}.
\end{proof}

\section{One-step block composition along an arbitrary boundary}
Let $G_1,G_2$ be connected loopless graphs such that
\[
V(G_1)\cap V(G_2)=B,\qquad E(G_1)\cap E(G_2)=\varnothing.
\]
Put $k=|B|\ge1$, $n_i=|V(G_i)|$, $G=G_1\cup G_2$, and $r=n_1+n_2-k-1$. In this section, assume
\begin{equation}\tag{A}\label{eq:A}
e<f\qquad(e\in E(G_1),\ f\in E(G_2)).
\end{equation}
The quotient $G_2/\pi$ identifies all vertices within each block of $\pi$, retaining edge labels, loops, and parallel edges. Write $K_\pi=\BC(G_2/\pi)$.

\begin{lemma}[NBC criterion in a quotient graph]\label{lem:quotient}
For $A_i\subseteq E(G_i)$,
\begin{equation}\label{eq:quotient}
A_1\cup A_2\in\BC(G)
\iff A_1\in\BC(G_1)\ \text{and}\ A_2\in K_{\pi(A_1)}.
\end{equation}
Moreover, $\rho\le\pi$ implies $K_\pi\subseteq K_\rho$.
\end{lemma}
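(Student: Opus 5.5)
The plan is to apply the sequential connectivity criterion of Lemma~\ref{lem:criterion} to $A=A_1\cup A_2$ in $G$, treating edges of $G_1$ and edges of $G_2$ separately, and using hypothesis \eqref{eq:A} to control which edges precede a given edge.

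\emph{Edges of $G_1$.} Let $e=xy\in E(G_1)$. By \eqref{eq:A}, $A_{<e}=(A_1)_{<e}$, so the criterion for $e$ in $G$ asks that $x,y$ be disconnected in $(V(G),(A_1)_{<e})$. Adding the isolated vertices of $V(G_2)\setminus B$ does not change connectivity among vertices of $G_1$. Hence this is exactly the criterion for $e$ in $G_1$, and the conditions for all $e\in E(G_1)$ together say $A_1\in\BC(G_1)$.

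\emph{Edges of $G_2$.} Let $f=uv\in E(G_2)$. Then $A_{<f}=A_1\cup(A_2)_{<f}$. I will show that $u,v$ are connected in $(V(G),A_1\cup(A_2)_{<f})$ if and only if their images are connected in $G_2/\pi(A_1)$ using $(A_2)_{<f}$. The point is that $A_1$ lives in $G_1$ and meets $G_2$ only at $B$, so for connectivity between vertices of $G_2$ its only effect is to identify boundary vertices lying in the same block of $\pi(A_1)$. Concretely, a walk from $u$ to $v$ in $A_1\cup(A_2)_{<f}$ splits into maximal $A_2$-segments and $A_1$-excursions. Each excursion starts and ends at vertices of $B$ joined in $A_1$, so it becomes a single vertex in the quotient, and the walk projects to a walk in the quotient. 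Conversely, a walk in the quotient lifts by inserting $A_1$-paths between boundary vertices of the same block. A loop in $G_2/\pi(A_1)$ is the case $u,v$ in one block: both sides then say ``connected via a path of length zero in the quotient,'' consistent with the loop convention in Lemma~\ref{lem:criterion}. Applying Lemma~\ref{lem:criterion} in the quotient, with the inherited order on $E(G_2)$, the conditions for all $f$ say $A_2\in K_{\pi(A_1)}$. If the quotient has a loop, $K_{\pi(A_1)}$ is void; correspondingly that loop edge $f$ always fails the criterion in $G$.

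\emph{Monotonicity.} If $\rho\le\pi$, then $G_2/\pi$ is a further quotient of $G_2/\rho$. Connectivity of two vertices by $(A_2)_{<f}$ in $G_2/\rho$ implies connectivity of their images in $G_2/\pi$. So any violation of the criterion in $G_2/\rho$ is a violation in $G_2/\pi$, which gives $K_\pi\subseteq K_\rho$, including the void case.

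I expect the main step needing care to be the walk-projection argument in the $G_2$ case, specifically the claim that excursions of a walk into $G_1$ begin and end on $B$. This holds because $V(G_1)\cap V(G_2)=B$ and the two edge sets are disjoint.
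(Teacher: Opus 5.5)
Your proposal is correct and follows the paper's proof. You apply Lemma~\ref{lem:criterion} edge by edge, use \eqref{eq:A} so that an edge of $G_1$ sees only $(A_1)_{<e}$ while an edge of $G_2$ sees all of $A_1$, identify connectivity through $A_1$ with gluing the blocks of $\pi(A_1)$, and obtain $K_\pi\subseteq K_\rho$ because coarser quotients only increase connectivity; your walk-projection argument and your handling of loops spell out the step the paper states in a single sentence.
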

\begin{proof}
For an edge of $G_1$, no edge of $G_2$ enters \eqref{eq:criterion}. For an edge $e$ of $G_2$, all of $A_1$ has already appeared. Two vertices of $G_2$ are connected in $A_1\cup(A_2)_{<e}$ precisely when they are connected after identifying the boundary blocks of $\pi(A_1)$. This proves \eqref{eq:quotient}. Coarser identifications can only increase connectivity, so the same criterion proves the inclusion.
\end{proof}

\begin{theorem}[Composition of block shellings]\label{thm:block}
Under \eqref{eq:A}, there is a disjoint decomposition of relative face sets
\begin{equation}\label{eq:state-decomposition}
\BC(G)=\bigsqcup_{\pi\in\Pi(B)}\Sc_\pi*K_\pi.
\end{equation}
Order the states by any linear extension of refinement, with finer partitions first. Within each state, use the relative shelling from Theorem~\ref{thm:relative} for the outer order on $F$, and a shelling of $K_\pi$ for the inner order on $U$. Then the resulting order on $F\cup U$ is a shelling of $\BC(G)$, with restriction face
\begin{equation}\label{eq:global-R}
R(F\cup U)=R_D(F)\cup P_\pi(F)\cup R_{2,\pi}(U).
\end{equation}
Consequently,
\begin{equation}\label{eq:nonnegative}
H_G(z)=\sum_{\pi\in\Pi(B)}J_{G_1,\pi}(z)H_{G_2/\pi}(z)
\end{equation}
is a decomposition into polynomials with nonnegative coefficients.
\end{theorem}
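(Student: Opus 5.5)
The plan is to derive everything from Lemma~\ref{lem:quotient} and Theorem~\ref{thm:relative}. The relative complexes $\Sc_\pi$ are taken in $G_1$ with respect to $B$, using $D_S\subseteq\BC(G_1)$ as in Section~3.

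\textbf{The decomposition.} Every $A\subseteq E(G)$ splits uniquely as $A_1\cup A_2$ with $A_i\subseteq E(G_i)$. By \eqref{eq:quotient}, $A\in\BC(G)$ if and only if $A_1\in\Sc_{\pi(A_1)}$ and $A_2\in K_{\pi(A_1)}$. Grouping faces by $\pi(A_1)$ gives the disjoint union \eqref{eq:state-decomposition}.

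\textbf{Facets.} Next I will identify the facets of $\BC(G)$. I claim they are exactly the sets $F\cup U$ with $F$ a facet of $D_S$ of state $\pi$ and $U$ a facet of $K_\pi$.
\begin{itemize}
\item If $K_\pi$ is void, because $G_2/\pi$ has a loop, no such facets occur, consistent with $H_{G_2/\pi}=0$.
\item Let $A_1\cup A_2$ be a face with $\pi(A_1)=\pi$. By the relative purity in Theorem~\ref{thm:relative}, $A_1$ extends to a facet $F$ of $D_S$ with the same state, so $K_\pi$ is unchanged. Since $G_2/\pi$ is connected, $K_\pi$ is pure, so $A_2$ extends to a facet $U$ of $K_\pi$.
\item Conversely, $F\cup U$ is maximal. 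It has cardinality $(n_1-|\pi|)+(n_2-k+|\pi|-1)=r$, the rank of $\BC(G)$.
\end{itemize}

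\textbf{Shelling intervals.} Fix a step $F\cup U$ in state $\pi$, and let $A_1'\cup A_2'\subseteq F\cup U$. Set $\sigma=\pi(A_1')$; since $A_1'\subseteq F$, we have $\sigma\le\pi$.
\begin{itemize}
\item \emph{Case $\sigma<\pi$.} The face has already appeared. Indeed, $A_1'\in\Sc_\sigma$ lies in some facet $F'$ of $D_{S'}$ of state $\sigma$. By Lemma~\ref{lem:quotient}, $A_2'\in K_\pi\subseteq K_\sigma$, so $A_2'$ lies in a facet $U'$ of $K_\sigma$. The facet $F'\cup U'$ belongs to an earlier block, because finer states come first.
\item \emph{Case $\sigma=\pi$.} The face appeared earlier exactly when one of two things holds. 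Either $A_1'$ was already covered at an earlier outer step $F'$ of state $\pi$; then $A_2'$ lies in some facet of $K_\pi$, so it appeared with $F'$. Or $A_1'$ is new at $F$, and $A_2'$ was covered at an earlier inner step $U'$ of the shelling of $K_\pi$.
\item Later blocks are irrelevant to this question.
\end{itemize}

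\textbf{Conclusion.} By \eqref{eq:relative-R}, the new faces at step $F\cup U$ are
\[
[R_D(F)\cup P_\pi(F),F]\times[R_{2,\pi}(U),U],
\]
which is the single Boolean interval with bottom \eqref{eq:global-R}. Summing $z^{|R|}$ via \eqref{eq:h-shell} factors blockwise as $J_{G_1,\pi}(z)\,H_{G_2/\pi}(z)$. This gives \eqref{eq:nonnegative}, and the nonnegativity of the coefficients is immediate from the restriction-face form.

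\textbf{Main obstacle.} The delicate step is the case $\sigma<\pi$: subfaces whose state becomes strictly finer must already be covered in earlier blocks. This is exactly where the refinement-compatible block order and the inclusion $K_\pi\subseteq K_\sigma$ are both needed. I would check it carefully, together with the interaction of loops in $G_2/\pi$, which makes $K_\pi$ void.
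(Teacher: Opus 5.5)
Your proposal is correct and follows the paper's proof essentially step by step. The shared steps are the decomposition from Lemma~\ref{lem:quotient}, the rank count identifying the facets $F\cup U$, the case $\sigma<\pi$ via $K_\pi\subseteq K_\sigma$ and the earlier block for $\sigma$, and the product of the outer relative interval with the inner interval of $K_\pi$.

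The one step you assert without justification is the ``only if'' half of your case $\sigma=\pi$: that a face with left state $\pi$ cannot lie in a facet $F''\cup U''$ of an earlier block of a \emph{different} state $\tau$. The paper checks this explicitly, and it takes one line. The inclusion $A_1'\subseteq F''$ forces $\pi\le\tau$, and a strictly coarser $\tau$ comes after $\pi$ in the chosen linear extension, so $\tau=\pi$.
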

\begin{proof}
Lemma~\ref{lem:quotient} gives the decomposition of face sets. The cardinalities of $F$ and $U$ are respectively
\[
n_1-|\pi|\quad\text{and}\quad n_2-k+|\pi|-1,
\]
whose sum is $r$. Every face in a state extends to such a union. Thus these unions are exactly the facets of the whole complex, each appearing once.

Consider a subface $A_1\cup A_2$ of $F\cup U$. Its left state $\rho=\pi(A_1)$ refines $\pi$. If $\rho<\pi$, then $A_2\in K_\pi\subseteq K_\rho$, so this face has already appeared in the earlier block for $\rho$. A face with left state $\pi$ cannot have appeared in an earlier block of a different state $\sigma$: containment in a facet of that block would require $\pi\le\sigma$, contrary to the chosen linear extension.

It therefore remains to test newness within state $\pi$. The outer relative shelling requires $R_D(F)\cup P_\pi(F)\subseteq A_1$, and the inner shelling requires $R_{2,\pi}(U)\subseteq A_2$. The new faces are exactly
\[
[R_D(F)\cup P_\pi(F)\cup R_{2,\pi}(U),\ F\cup U].
\]
This proves the shelling. The edge sets of the two sides are disjoint, so restriction-face cardinalities add, giving \eqref{eq:nonnegative}.
\end{proof}

\begin{remark}\label{rem:one-step}
Theorem~\ref{thm:block} allows a boundary of arbitrary size, but concerns a single gluing step. It does not provide a closed recursion that updates the required shellings of $D_S$ from previous records when the boundary changes. Reusing shellings of the auxiliary complexes $D_S$ is also different from unconditionally reusing arbitrary given shellings of $\BC(G_i)$.
\end{remark}

\section{The Negami splitting identity and its chromatic specialization}
\subsection{The three-variable state sum and the partition matrix}
We use the normalization
\begin{equation}\label{eq:negami}
f(G;t,x,y)=\sum_{A\subseteq E(G)}t^{\kappa_G(A)}x^{|A|}y^{|E(G)|-|A|}
\end{equation}
of the Negami polynomial \cite{Negami,NegamiKawagoe}. Here $\kappa_G(A)$ is the number of components of the spanning subgraph, including isolated vertices. In particular, the chromatic polynomial is
\begin{equation}\label{eq:chromatic}
P_G(q)=f(G;q,-1,1).
\end{equation}
Define the boundary state sums by
\begin{equation}\label{eq:state-negami}
N_{i,\pi}(t,x,y)=\sum_{\pi(A)=\pi}
t^{\kappa_{G_i}(A)-|\pi|}x^{|A|}y^{|E(G_i)|-|A|}.
\end{equation}
The exponent of $t$ is nonnegative because exactly $|\pi|$ components meet the boundary.

\begin{proposition}[Splitting by boundary states]\label{prop:split}
Without any assumption on the edge order,
\begin{align}
f(G;t,x,y)
&=\sum_{\pi,\rho\in\Pi(B)}t^{|\pi\vee\rho|}N_{1,\pi}N_{2,\rho},\label{eq:split-two}\\
&=\sum_{\pi\in\Pi(B)}N_{1,\pi}(t,x,y)f(G_2/\pi;t,x,y).\label{eq:split-one}
\end{align}
Here $\pi\vee\rho$ denotes the least common coarsening.
\end{proposition}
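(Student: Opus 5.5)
The plan is to split each edge set $A\subseteq E(G)$ as $A=A_1\sqcup A_2$ with $A_i=A\cap E(G_i)$. The three-variable weight then factors except for the component count. Edge-disjointness gives
\[
x^{|A|}y^{|E(G)|-|A|}=\prod_{i=1,2}x^{|A_i|}y^{|E(G_i)|-|A_i|},
\]
so the only work is to express $\kappa_G(A)$ through data of $A_1$ and $A_2$. No edge order enters at any point, which explains the phrase ``without any assumption on the edge order.''

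\textbf{Component count.} Let $\pi=\pi(A_1)$ and $\rho=\pi(A_2)$. I would classify the components of $(V(G),A)$ into two kinds.

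\begin{itemize}
\item Components avoiding $B$ lie entirely in one of the graphs $(V(G_i),A_i)$ and avoid $B$ there. There are $\kappa_{G_i}(A_i)-|\pi|$ of them in $G_1$ and the corresponding number with $\rho$ in $G_2$, because exactly $|\pi|$ components of $A_1$ meet $B$.
\item Components meeting $B$ are unions of boundary-meeting components of $A_1$ and of $A_2$, glued at boundary vertices. Their traces on $B$ are exactly the blocks of the join $\pi\vee\rho$. This is the transitive closure of ``same block of $\pi$ or same block of $\rho$,'' since two boundary vertices are joined in $A$ iff there is a path alternating between $A_1$- and $A_2$-segments with breakpoints in $B$.
\end{itemize}

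Together these give
\[
\kappa_G(A)=|\pi\vee\rho|+\bigl(\kappa_{G_1}(A_1)-|\pi|\bigr)+\bigl(\kappa_{G_2}(A_2)-|\rho|\bigr).
\]
Substituting into \eqref{eq:negami} and grouping the pairs $(A_1,A_2)$ by $(\pi(A_1),\pi(A_2))$ yields \eqref{eq:split-two} directly from \eqref{eq:state-negami}.

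\textbf{Second form.} For \eqref{eq:split-one}, I would fix $\pi$ and show
\[
f(G_2/\pi;t,x,y)=\sum_{\rho}t^{|\pi\vee\rho|}N_{2,\rho}.
\]
Edges of $G_2/\pi$ are in label-preserving bijection with $E(G_2)$, so it suffices to compare component counts for each $A_2$. Components of $(V(G_2/\pi),A_2)$ that avoid the image of $B$ correspond to components of $A_2$ avoiding $B$, giving $\kappa_{G_2}(A_2)-|\rho|$. Components meeting the image of $B$ correspond to blocks of $\pi\vee\rho$: identifying each $\pi$-block to a point and then connecting via $A_2$ merges blocks exactly as in the join. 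Loops and parallel edges created by the quotient cause no trouble, since $\kappa$ and the edge exponents ignore them.

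\textbf{Main obstacle.} The only delicate step is the claim that the boundary-meeting components of the union correspond to the blocks of $\pi\vee\rho$, and similarly after the quotient. I would prove it with a short path argument. Walk along a path in $A$ between two boundary vertices and cut it at its visits to $B$. Each segment lies in a single $A_i$, because the graphs share no vertices outside $B$, so consecutive boundary vertices on the path are equivalent under $\pi$ or $\rho$. The converse is immediate by concatenating paths. I would also check the degenerate case $k=1$, where $\Pi(B)$ is a singleton and the formula reduces to multiplicativity up to a factor $t^{-1}$, as a sanity test.
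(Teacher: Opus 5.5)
Your proof is correct and follows the paper's argument. You use the same splitting $A=A_1\sqcup A_2$, the same component count $\kappa_G(A)=|\pi\vee\rho|+(\kappa_{G_1}(A_1)-|\pi|)+(\kappa_{G_2}(A_2)-|\rho|)$, and the same factorization of the $x,y$ weights by edge-disjointness. Your explicit check that $\sum_\rho t^{|\pi\vee\rho|}N_{2,\rho}=f(G_2/\pi;t,x,y)$, via component counts in the quotient, spells out the step the paper compresses into ``summing first over $A_2$.''
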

\begin{proof}
For edge sets $A_1,A_2$ of states $\pi,\rho$,
\[
\kappa_G(A_1\cup A_2)=\kappa_{G_1}(A_1)-|\pi|
+\kappa_{G_2}(A_2)-|\rho|+|\pi\vee\rho|.
\]
Components avoiding the boundary remain on their respective sides, while components meeting it are described by the common coarsening. The exponents of $x$ and $y$ add because the edge sets are disjoint. This proves \eqref{eq:split-two}; summing first over $A_2$ gives \eqref{eq:split-one}.
\end{proof}

Let $M_B(t)=(t^{|\pi\vee\rho|})_{\pi,\rho\in\Pi(B)}$. If $\mathbf F_i$ is the vector of quotient polynomials $f(G_i/\pi)$ and $\mathbf N_i$ is the vector of state sums, then
\[
\mathbf F_i=M_B(t)\mathbf N_i.
\]
Set $Z_{\pi,\tau}=\one_{\pi\le\tau}$ and $(t)_j=t(t-1)\cdots(t-j+1)$. Then
\begin{equation}\label{eq:partition-matrix}
M_B(t)=Z\operatorname{diag}((t)_{|\tau|})Z^{\mathsf T},\qquad
\det M_B(t)=\prod_{\tau\in\Pi(B)}(t)_{|\tau|}.
\end{equation}
Indeed, $t^{|\rho|}=\sum_{\tau\ge\rho}(t)_{|\tau|}$ classifies color assignments to boundary blocks according to their equal-color classes. Formally, this gives
\begin{equation}\label{eq:matrix-split}
f(G)=\mathbf F_1^{\mathsf T}M_B(t)^{-1}\mathbf F_2.
\end{equation}
The singularity of this matrix at particular values of $t$ is related to the issues studied in \cite{Burgos}. The denominator-free identities \eqref{eq:split-two} and \eqref{eq:split-one}, however, remain valid at these values.

\subsection{A sign-reversing involution preserving boundary states}
\begin{lemma}\label{lem:cancel}
For any edge order,
\begin{equation}\label{eq:N-nbc}
N_{G,\pi}(q,-1,1)=\sum_{A\in\Sc_\pi}(-1)^{|A|}q^{n-|A|-|\pi|}.
\end{equation}
\end{lemma}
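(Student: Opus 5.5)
We prove \eqref{eq:N-nbc} by a sign-reversing involution on the state-$\pi$ edge sets that are not NBC. Its fixed points are exactly the faces of $\Sc_\pi$.

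Start by writing out the left side. With $q,x,y$ specialized to $q,-1,1$, the definition \eqref{eq:state-negami} gives
\[
N_{G,\pi}(q,-1,1)=\sum_{A\subseteq E(G),\ \pi(A)=\pi}(-1)^{|A|}q^{\kappa_G(A)-|\pi|}.
\]
For an NBC face $A$, which is a forest, we have $\kappa_G(A)=n-|A|$. So the terms with $A\in\Sc_\pi$ are exactly the right side of \eqref{eq:N-nbc}. It remains to show that the remaining terms, namely those with $A\notin\BC(G)$ and $\pi(A)=\pi$, cancel.

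The involution follows Whitney's classical argument. For $A\notin\BC(G)$, Lemma~\ref{lem:criterion} supplies broken circuits inside $A$. Among all edges $e=xy$ of $G$ whose endpoints are connected in $(V(G),A_{<e})$, choose the \emph{smallest} one, $e^*$. Define $\phi(A)=A\triangle\{e^*\}$.

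Three facts must be checked.
\begin{enumerate}
\item The choice of $e^*$ is unchanged under $\phi$. The criterion for an edge $e$ only involves $A_{<e}$. For $e\le e^*$ that set is unaffected by toggling $e^*$, since $e^*\notin A_{<e}$. So no edge smaller than $e^*$ becomes, or stops being, a witness, and $e^*$ itself remains a witness. Hence $\phi(A)$ is again non-NBC with the same minimal witness, and $\phi$ is an involution.
\item Component counts are preserved. The endpoints of $e^*$ are already joined by a path in $A_{<e^*}\subseteq A\setminus\{e^*\}$. Adding or removing $e^*$ therefore changes no connectivity at all. Thus $\kappa_G(\phi(A))=\kappa_G(A)$, and in particular $\pi(\phi(A))=\pi(A)=\pi$. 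This is the point where the boundary state is preserved, which is what makes the cancellation work state by state.
\item Signs are reversed. Since $|\phi(A)|=|A|\pm1$, the two terms carry opposite signs and the same power of $q$.
\end{enumerate}
Hence the non-NBC terms of fixed state cancel in pairs, and \eqref{eq:N-nbc} follows.

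The main obstacle is the stability in the first fact, and it is the reason for choosing the \emph{smallest} witness edge, not the largest. If $e^*$ were the largest witness, toggling it could create or destroy witnesses above it, and the involution would fail. One should also confirm that loops cannot occur (the graph is loopless), and that parallel edges are handled correctly: they are covered by the path-of-length-one case of Lemma~\ref{lem:criterion}.
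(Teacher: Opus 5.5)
Your proof is correct and is essentially the paper's argument: the same sign-reversing involution toggling the smallest edge $e^*$ whose endpoints are joined in $A_{<e^*}$, with the same checks that the choice is stable, that components (hence the boundary state) are preserved, and that the sign flips. One correction to your closing remark: toggling \emph{any} witness leaves the whole witness set unchanged, so choosing the largest witness would also give an involution, and minimality is a convenience rather than a necessity. The reason is that for $e>e^*$ the endpoints of $e^*$ are already joined in $A_{<e^*}\subseteq A_{<e}\setminus\{e^*\}$, so connectivity in $A_{<e}$ is unaffected.
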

\begin{proof}
For a non-NBC set $A$, choose the smallest edge $e$ whose endpoints are connected in $A_{<e}$, and pair $A$ with $A\triangle\{e\}$. Edges smaller than $e$ do not change, so the smallest violating edge is unchanged; this is an involution. The endpoints of $e$ are already connected by other edges, hence all components and the boundary state are preserved. The parity of the number of edges changes, so the contributions cancel. The remaining NBC faces are forests and have $n-|A|$ components.
\end{proof}

\begin{theorem}[Restriction-face enumerators and chromatic specialization]\label{thm:negami-h}
For $q=(z-1)/z$,
\begin{align}
J_{G,\pi}(z)&=(-z)^{n-|\pi|}N_{G,\pi}(q,-1,1),\label{eq:J-N}\\
H_G(z)&=(-z)^{n-1}\frac{P_G(q)}q.\label{eq:H-P}
\end{align}
Thus \eqref{eq:nonnegative} holds as a polynomial identity for state polynomials computed with arbitrary local edge orders. Under \eqref{eq:A}, each summand enumerates the restriction faces in an actual block of the shelling in Theorem~\ref{thm:block}.
\end{theorem}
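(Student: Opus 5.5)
The plan is to compare the definition of $J_{G,\pi}$ in \eqref{eq:J-state} with the right-hand side of Lemma~\ref{lem:cancel}. Both are sums over $\Sc_\pi$, so the first identity comes down to matching the summands one face at a time.

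Fix $A\in\Sc_\pi$ and put $a=|A|$ and $m=r_\pi=n-|\pi|$. Its contribution to $J_{G,\pi}$ is $z^a(1-z)^{m-a}$. By Lemma~\ref{lem:cancel}, its contribution to $N_{G,\pi}(q,-1,1)$ is $(-1)^a q^{m-a}$. Substituting $q=(z-1)/z$ gives
\[
(-z)^{m}(-1)^a\Bigl(\frac{z-1}{z}\Bigr)^{m-a}
=(-1)^{m+a}z^{a}(z-1)^{m-a}
=z^a(1-z)^{m-a},
\]
where the last step uses $(-1)^{m+a}=(-1)^{m-a}$. Summing over $\Sc_\pi$ proves \eqref{eq:J-N}. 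This identity already holds as rational functions in $z$, so it is a polynomial identity and needs no edge-order hypothesis.

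For \eqref{eq:H-P}, I would take $B$ to be a single vertex. Then $\Pi(B)$ contains only the one-block partition $\pi$, and $\Sc_\pi=\BC(G)$. In this case $N_{G,\pi}(t,x,y)=f(G;t,x,y)/t$, because $\kappa_G(A)-1$ is the exponent in \eqref{eq:state-negami}. By \eqref{eq:chromatic}, $N_{G,\pi}(q,-1,1)=P_G(q)/q$. Also $J_{G,\pi}=H_G$, since the rank is $r_\pi=n-1$, which is the rank of $\BC(G)$. Applying \eqref{eq:J-N} then gives \eqref{eq:H-P}. For the degenerate conventions I would check that a single vertex gives $H_G=1$, matching $P_G(q)/q=1$. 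For quotient graphs with a loop, $P=0$ and $\BC$ is void, so both sides vanish.

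The consequence is the main obstacle. We must show that \eqref{eq:nonnegative} holds even when $J_{G_1,\pi}$ and $H_{G_2/\pi}$ are computed with arbitrary edge orders. The route is to specialize \eqref{eq:split-one} at $(t,x,y)=(q,-1,1)$, which gives
\[
P_G(q)=\sum_\pi N_{1,\pi}(q,-1,1)\,P_{G_2/\pi}(q).
\]
Next I multiply both sides by $(-z)^{n-1}/q$, where $n=n_1+n_2-k$. I split the exponent as $n-1=(n_1-|\pi|)+(n_2-k+|\pi|-1)$. The two parts are exactly the ranks from the proof of Theorem~\ref{thm:block}, since $G_2/\pi$ has $n_2-k+|\pi|$ vertices. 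Applying \eqref{eq:J-N} to $G_1$ and \eqref{eq:H-P} to $G_2/\pi$ then gives each summand as $J_{G_1,\pi}H_{G_2/\pi}$.

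Two points need care here. First, $G_2/\pi$ may be disconnected only if $G_2$ is, which is excluded, so the connectivity needed for \eqref{eq:H-P} holds. Second, looped quotients contribute zero on both sides. Since neither \eqref{eq:J-N} nor \eqref{eq:H-P} depends on the order, the identity holds for any local orders.

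Finally, under \eqref{eq:A}, Theorem~\ref{thm:block} already gives each block's restriction-face generating function as $J_{G_1,\pi}H_{G_2/\pi}$. This is immediate from \eqref{eq:global-R} and the additivity of cardinalities.
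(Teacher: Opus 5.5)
Your proposal is correct and follows the paper's proof: termwise substitution of $q=(z-1)/z$ into Lemma~\ref{lem:cancel}, the chromatic specialization of \eqref{eq:split-one} with the rank split $n-1=(n_1-|\pi|)+(n_2-k+|\pi|-1)$, and \eqref{eq:global-R} for the block interpretation under \eqref{eq:A}. Deriving \eqref{eq:H-P} as the one-vertex-boundary case of \eqref{eq:J-N} repackages the paper's ``same cancellation without fixing a boundary state,'' and your checks of the single-vertex and looped-quotient conventions fill in details the paper leaves implicit.
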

\begin{proof}
Multiplying each summand of \eqref{eq:N-nbc} by $(-z)^{n-|\pi|}$ and substituting for $q$ gives $z^{|A|}(1-z)^{n-|\pi|-|A|}$. The same cancellation without fixing a boundary state gives \eqref{eq:H-P}. Apply the chromatic specialization to \eqref{eq:split-one} and use
\[
n-1=(n_1-|\pi|)+(n_2-k+|\pi|-1)
\]
to obtain \eqref{eq:nonnegative}. The state sums do not use an edge order, so neither do the polynomials $J_{G,\pi}$. Formula~\eqref{eq:global-R} gives the interpretation by an actual shelling.
\end{proof}

\begin{remark}
Proving the three-variable identity \eqref{eq:split-one} is different from interpreting all three variables by nonnegative weights on restriction faces. The shelling interpretation established here concerns $f(G;q,-1,1)$ and its quotient-graph counterparts.
\end{remark}

\section{Explicit formulas for two and three terminals}
\subsection{A two-stage decomposition for two terminals}
Let the boundary be $\{u,v\}$, put $\Delta=\BC(G)$, and set
\[
D=\{A\in\Delta:u\not\leftrightarrow v\text{ in }A\},\qquad
K=\BC(G/uv),\qquad \widehat H_G=h_K.
\]
Denote the relative polynomial of the connected state by $J_{G;u,v}$.
\begin{proposition}\label{prop:two}
We have
\begin{align}
h_D&=H_G+z\widehat H_G,\label{eq:D-two}\\
J_{G;u,v}&=H_G-(1-z)h_D
=z\{H_G-(1-z)\widehat H_G\}.\label{eq:J-two}
\end{align}
Moreover, under \eqref{eq:A},
\begin{equation}\label{eq:two-complex}
\begin{aligned}
\BC(G_1\cup G_2)&=(D_1*\Delta_2)\cup(\Delta_1*K_2),\\
(D_1*\Delta_2)\cap(\Delta_1*K_2)&=D_1*K_2,
\end{aligned}
\end{equation}
and
\begin{equation}\label{eq:two-positive}
H_{G_1\cup G_2}=h_{D_1}H_{G_2}+J_{G_1;u,v}\widehat H_{G_2}.
\end{equation}
\end{proposition}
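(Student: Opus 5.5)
Throughout, $B=\{u,v\}$, so $\Pi(B)$ has exactly two states: the discrete state $\hat0=\{\{u\},\{v\}\}$ and the connected state $\hat1=\{\{u,v\}\}$. I will obtain everything by specializing the earlier results to these two states.

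\textbf{Step 1: $h_D$ and $J_{G;u,v}$.} Apply Lemma~\ref{lem:link} with $S=\{u,v\}$ and $Q$ a single virtual edge $uv$ placed after all original edges. This gives $D=\lk_{\BC(G+Q)}Q$, so $D$ is pure of rank $n-2$. Moreover, the faces of $\BC(G+Q)$ split into those avoiding $Q$ and those containing $Q$. The faces avoiding $Q$ form $\BC(G)$, because $Q$ is largest and so never creates a new broken circuit among original edges. The faces containing $Q$ are $Q*D$. Computing $h$ at rank $n-1$ gives
\[
H_{G+Q}=H_G+z\,h_D .
\]
On the other hand, Lemma~\ref{lem:quotient}, applied with $G_1=G$ and $G_2$ the single edge $Q$ (or argued directly by deletion--contraction for the largest edge), yields $H_{G+Q}=H_G+z\widehat H_G$. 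Hence $h_D=\widehat H_G$? This has to be reconciled with the claimed formula $h_D=H_G+z\widehat H_G$. So I will instead compare $D$ with $\Delta$ directly. The state decomposition gives $\Delta=D\sqcup \Sc_{\hat1}$. Since $D$ has rank one less than $\Delta$, this yields
\[
H_G=(1-z)h_D+J_{G;u,v},
\]
which is the first equality in \eqref{eq:J-two}. For \eqref{eq:D-two} I will use the chromatic form, Theorem~\ref{thm:negami-h}. Specializing the splitting identity \eqref{eq:split-two} with $G_2$ the single edge $uv$, or simply counting colorings, gives $P_G=N_{\hat0}+N_{\hat1}$ with $P_{G/uv}=N_{\hat1}$ (up to factors of $q$), and $N_{\hat0}$ counts colorings with $u,v$ colored differently, namely $P_{G}-P_{G/uv}$ adjusted. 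Converting each polynomial via \eqref{eq:J-N} and \eqref{eq:H-P} with the appropriate rank shifts should give both \eqref{eq:D-two} and the second form of \eqref{eq:J-two}. The main obstacle is bookkeeping the powers $(-z)^{\cdot}$ and the factor $q=(z-1)/z$, and checking consistency with the rank bookkeeping above. I will verify the result on $G$ a single edge $uv$: there $H_G=1$, $\widehat H_G=0$ since $G/uv$ has a loop, $D=\{\varnothing\}$ of rank $0$ so $h_D=1$, and $J=z$. Both displayed formulas check out on this example.

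\textbf{Step 2: the complexes.} By Lemma~\ref{lem:quotient}, under \eqref{eq:A}, a face $A_1\cup A_2$ is NBC iff either $A_1\in D_1$ and $A_2\in\BC(G_2)=\Delta_2$, or $A_1\in\Sc_{\hat1}\subseteq\Delta_1$ and $A_2\in K_2$. Because $K_2\subseteq\Delta_2$ (refinement inclusion), this is exactly the union $(D_1*\Delta_2)\cup(\Delta_1*K_2)$. Joins intersect componentwise, and $D_1\subseteq\Delta_1$, $K_2\subseteq\Delta_2$, so the intersection is $D_1*K_2$.

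\textbf{Step 3: \eqref{eq:two-positive}.} I will specialize \eqref{eq:nonnegative} of Theorem~\ref{thm:block}. The discrete state has $J_{G_1,\hat0}=h_{\Sc_{\hat0}}=h_{D_1}$, since $\Sc_{\hat0}=D$ is a genuine complex with rank $n_1-2$, and $G_2/\hat0=G_2$. The connected state contributes $J_{G_1;u,v}\,H_{G_2/uv}=J_{G_1;u,v}\widehat H_{G_2}$. Adding these two contributions gives \eqref{eq:two-positive}.
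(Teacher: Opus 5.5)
Your Steps 2 and 3 are correct and match the paper. There you use the two-state case of Lemma~\ref{lem:quotient} together with $K_2\subseteq\Delta_2$, and the specialization of \eqref{eq:nonnegative} with $\Sc_{\hat0}=D_1$. Your derivation of $H_G=(1-z)h_D+J_{G;u,v}$ from $\Delta=D\sqcup\Sc_{\hat1}$ is also correct. The gap is \eqref{eq:D-two}, and the second equality of \eqref{eq:J-two} depends on it.

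Your first attempt rests on a false claim: the faces of $\BC(G+Q)$ avoiding $Q$ are not all of $\BC(G)$. Because $Q$ is largest, each $u$--$v$ path $P$ in $G$ is the broken circuit of $P\cup\{Q\}$. Equivalently, \eqref{eq:criterion} applied to the edge $Q$ itself requires $u,v$ to be disconnected in $A\setminus\{Q\}$. So the faces avoiding $Q$ are exactly $D$, $\BC(G+Q)=D*\{\varnothing,\{Q\}\}$ is a cone, and $H_{G+Q}=h_D$, not $H_G+zh_D$. Lemma~\ref{lem:quotient} and Theorem~\ref{thm:block}, applied with $G_2$ the single edge $Q$, give exactly this, namely $h_D\cdot1+J_{G;u,v}\cdot0$. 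The expression $H_G+z\widehat H_G$ comes instead from deletion--contraction $P_{G+Q}=P_G-P_{G/uv}$ together with \eqref{eq:H-P}. Your own single-edge check would have caught the error, since there $h_D=1\ne0=\widehat H_G$. Once corrected, equating the two expressions for $H_{G+Q}$ proves \eqref{eq:D-two} directly. This is a clean combinatorial alternative to the paper's argument, and substituting into $H_G=(1-z)h_D+J_{G;u,v}$ then gives $J_{G;u,v}=z\{H_G-(1-z)\widehat H_G\}$.

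Your fallback chromatic sketch does not close the gap, because the relations you state are wrong. From $\mathbf F=M_B(t)\mathbf N$ one has $P_G=q^2N_{\hat0}+qN_{\hat1}$ and $P_{G/uv}=q(N_{\hat0}+N_{\hat1})$, and hence $P_G-P_{G/uv}=q(q-1)N_{\hat0}$. In particular $P_G\ne N_{\hat0}+N_{\hat1}$, and $P_{G/uv}$ is not a multiple of $N_{\hat1}$ alone, since colorings with $u,v$ equal arise from both states. With the correct relations, \eqref{eq:J-N} and \eqref{eq:H-P} give $H_G=(1-z)h_D+J_{G;u,v}$ and $\widehat H_G=h_D-z^{-1}J_{G;u,v}$. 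Eliminating $J_{G;u,v}$ gives \eqref{eq:D-two}, and substituting back gives the second form of \eqref{eq:J-two}; this is exactly the paper's proof. As written, you never carry out this computation, so \eqref{eq:D-two} and the second equality of \eqref{eq:J-two} remain unproved in your proposal.
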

\begin{proof}
Writing the two state sums as $N_0,N_1$, we have $P_G=q^2N_0+qN_1$ and $P_{G/uv}=q(N_0+N_1)$. Substitution of \eqref{eq:J-N} gives \eqref{eq:D-two} and \eqref{eq:J-two}. The inclusion $K_2\subseteq\Delta_2$ and \eqref{eq:state-decomposition} give \eqref{eq:two-complex}. The new faces in the second stage form $(\Delta_1\setminus D_1)*K_2$, and their restriction faces are
\[
R_{\Delta_1}(T)\cup P_T(u,v)\cup R_{K_2}(F).
\]
These account for the second summand of \eqref{eq:two-positive}.
\end{proof}

Substituting \eqref{eq:D-two} and \eqref{eq:J-two} into \eqref{eq:two-positive} gives the symmetric, order-independent identity
\begin{equation}\label{eq:parallelH}
H_{G_1\cup G_2}=H_1H_2+z(H_1\widehat H_2+\widehat H_1H_2)
-z(1-z)\widehat H_1\widehat H_2.
\end{equation}
Although this expression contains a subtraction, \eqref{eq:two-positive} is a nonnegative decomposition arising from a relative shelling.

\begin{remark}[Relation to matroid $2$-sums]
Add a virtual edge $p=uv$ to each $G_i$. Under the usual conditions for a $2$-sum, namely that $p$ is neither a loop nor a coloop,
\[
M(G_1\cup G_2)=M(G_1+p)\oplus_p M(G_2+p).
\]
The description of circuits as local circuits or unions of a $u$--$v$ path from each side agrees with the circuit description of this $2$-sum. The decomposition \eqref{eq:two-complex} for a prescribed edge order and the correction to restriction faces nevertheless require the ordered argument above.
\end{remark}

\subsection{The five states for three terminals}
For $B=\{u,v,w\}$, the partitions are
\[
\pi_0=u|v|w,\quad \pi_{uv}=uv|w,\quad
\pi_{uw}=uw|v,\quad\pi_{vw}=vw|u,\quad\pi_t=uvw.
\]
Table~\ref{tab:states} gives one choice of representatives and correction paths.
\begin{table}[htbp]\centering\small
\caption{Auxiliary complexes and restriction-face corrections for the five states.}\label{tab:states}
\begin{tabular}{lll}\toprule
State $\pi$ & Auxiliary complex & $P_\pi(F)$\\\midrule
$u|v|w$ & $D_{\{u,v,w\}}$ & $\varnothing$\\
$uv|w$ & $D_{\{u,w\}}$ & $P_F(u,v)$\\
$uw|v$ & $D_{\{u,v\}}$ & $P_F(u,w)$\\
$vw|u$ & $D_{\{u,v\}}$ & $P_F(v,w)$\\
$uvw$ & $\BC(G_1)$ & $P_F(u,v)\cup P_F(u,w)$\\\bottomrule
\end{tabular}
\end{table}
Under \eqref{eq:A}, the blocks can be attached in the order $\pi_0$, the three two-block states in any order, and $\pi_t$. The splitting matrix is
\begin{equation}\label{eq:M3}
M_3(q)=
\begin{pmatrix}
q^3&q^2&q^2&q^2&q\\
q^2&q^2&q&q&q\\
q^2&q&q^2&q&q\\
q^2&q&q&q^2&q\\
q&q&q&q&q
\end{pmatrix},\qquad \det M_3=q^5(q-1)^4(q-2).
\end{equation}

\begin{corollary}[Recovery from five quotient polynomials]\label{cor:five}
Let $H=H_{G_1}$, $H_{ab}=H_{G_1/(a=b)}$, and $H_t=H_{G_1/(u=v=w)}$. Abbreviating the state indices by $0,ab,t$, we have
\begin{align}
J_0&=\frac{H+z(H_{uv}+H_{uw}+H_{vw})+2z^2H_t}{1+z},\label{eq:five0}\\
J_{ab}&=z\{J_0-H_{ab}-zH_t\},\label{eq:fivepair}\\
J_t&=H-(1-z)^2J_0-(1-z)(J_{uv}+J_{uw}+J_{vw}).\label{eq:fivet}
\end{align}
All these expressions are polynomials with nonnegative coefficients.
\end{corollary}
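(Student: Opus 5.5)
The plan is to specialize the matrix identity $\mathbf F_1=M_B(t)\mathbf N_1$ of Proposition~\ref{prop:split} to $B=\{u,v,w\}$ and $t=q=(z-1)/z$, which gives the five rows of $M_3(q)$ in \eqref{eq:M3}. I then translate every entry with Theorem~\ref{thm:negami-h}: by \eqref{eq:J-N}, $N_\rho=(-z)^{-(n-|\rho|)}J_\rho$; by \eqref{eq:H-P}, applied to a quotient $G_1/\sigma$ with $n-3+|\sigma|$ vertices, $H_{G_1/\sigma}=(-z)^{n-4+|\sigma|}P_{G_1/\sigma}(q)/q$. Throughout I use $-zq=1-z$ to clear the powers of $-z$.

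For $\sigma=\pi_0$ the row $(q^3,q^2,q^2,q^2,q)$ becomes
\[
H=(1-z)^2J_0+(1-z)(J_{uv}+J_{uw}+J_{vw})+J_t,
\]
which is \eqref{eq:fivet} directly. The rows for $\sigma=\pi_{ab}$ and $\sigma=\pi_t$ similarly become, up to the factor bookkeeping I will check,
\[
-zH_{ab}=-z(1-z)J_0+(1-z)J_{ab}-z\sum_{\text{other pairs}}J_{cd}+J_t
\]
and
\[
z^2H_t=-z\,\bigl(\text{same combination summed with unit weights}\bigr)\text{-type relation}.
\]
My intended form of the last row is $H_t=$ a combination of $J_0,\sum J_{ab},J_t$ with explicit powers of $z$. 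Subtracting the $\pi_t$-row from each pair row eliminates $J_t$ and the two other pair sums, leaving a relation between $J_{ab}$, $J_0$, $H_{ab}$ and $H_t$. Rearranged, this should be \eqref{eq:fivepair}.

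I then substitute \eqref{eq:fivepair} for all three pairs into the $\pi_t$-row, together with the expression for $J_t$ from \eqref{eq:fivet}. This leaves a single linear equation in $J_0$ whose coefficient is a multiple of $1+z$, and solving it gives \eqref{eq:five0}. As a sanity check on the elimination, $\det M_3=q^5(q-1)^4(q-2)$, and $q-2$ corresponds to $-(1+z)/z$, so the factor $1+z$ is exactly the expected denominator.

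Nonnegativity needs no further work. By Theorem~\ref{thm:relative}, each $J_\pi$ lies in $\ZZ_{\ge0}[z]$, being a sum of restriction-face monomials, and the right-hand sides are equal to these $J_\pi$. In particular the quotient in \eqref{eq:five0} is automatically a polynomial, because it equals $J_0$. The empty-state convention $J=0$ and $H=0$ for quotients with loops is consistent with the same linear relations, so no case split is needed.

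The main obstacle is the bookkeeping of the powers of $-z$ in the two quotient rows, since the vertex counts of the quotients differ. I would handle it by first rewriting each row of $\mathbf F_1=M_3\mathbf N_1$ as $H_{G_1/\sigma}=\sum_\rho c_{\sigma\rho}(z)J_\rho$ with $c_{\sigma\rho}=(-z)^{|\rho|-|\sigma\vee\rho|}\cdot(-zq)^{|\sigma\vee\rho|-1}\cdot(-z)^{\dots}$ computed once in general. Then all five rows come out uniformly, and the elimination is a small triangular solve in the order $J_0$, $J_{ab}$, $J_t$.
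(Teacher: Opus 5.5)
Your proposal is correct and follows the paper's proof: translate the rows of $\mathbf F_1=M_3\mathbf N_1$ through \eqref{eq:J-N} and \eqref{eq:H-P}, eliminate, and read off polynomiality and nonnegativity from Theorem~\ref{thm:relative}. The one row you left vague is $z^2H_t=z^2J_0-z(J_{uv}+J_{uw}+J_{vw})+J_t$. Subtracting it from your pair row gives $J_{ab}=zJ_0-zH_{ab}-z^2H_t$, and your final substitution then yields $(1+z)J_0=H+z(H_{uv}+H_{uw}+H_{vw})+2z^2H_t$, exactly as you intended.
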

\begin{proof}
Apply \eqref{eq:J-N} to $\mathbf F=M_3\mathbf N$. The resulting relations are
\[
H_t=J_0-z^{-1}(J_{uv}+J_{uw}+J_{vw})+z^{-2}J_t,
\]
\[
H_{ab}=(1-z)J_0-(1-z)z^{-1}J_{ab}
+\sum_{cd\ne ab}J_{cd}-z^{-1}J_t,
\]
together with \eqref{eq:fivet}, rearranged. Solving gives \eqref{eq:five0}--\eqref{eq:fivet}. Polynomiality and nonnegativity follow from Theorem~\ref{thm:relative}. In particular, the numerator in \eqref{eq:five0} is divisible by $1+z$.
\end{proof}

\begin{example}[A five-state computation]\label{ex:five-numeric}
Let $G_1=K_{3,2}$, with its part of size three as boundary $\{u,v,w\}$, and let $G_2$ be a three-edge star with the same boundary as its leaves. Each quotient $G_2/\pi$ becomes a tree after parallel edges are simplified, so $H_{G_2/\pi}=1$. For $G_1$,
\[
H=1+2z+3z^2+z^3,\qquad
H_{uv}=H_{uw}=H_{vw}=1+z+z^2,\qquad H_t=1.
\]
Corollary~\ref{cor:five} gives
\[
J_0=1+4z+4z^2,\qquad
J_{uv}=J_{uw}=J_{vw}=2z^2+3z^3,\qquad
J_t=2z^3+5z^4.
\]
Thus the glued graph $K_{3,3}$ has
\[
H_{K_{3,3}}=1+4z+10z^2+11z^3+5z^4.
\]
At $z=1$, the numbers of facets in the five states are $9,5,5,5,7$, giving $31$ in total. The same gluing will provide the geometric obstruction in Proposition~\ref{prop:midpoint}.
\end{example}

\subsection{What fails without the edge-order hypothesis}
Split a four-cycle into two paths of length two, $G_1=\{e_1,e_4\}$ and $G_2=\{e_2,e_3\}$, and order the edges by $e_1<e_2<e_3<e_4$. Its unique broken circuit is $\{e_1,e_2,e_3\}$. Yet $\{e_1\}\in D_1$ and $\{e_2,e_3\}\in\Delta_2$, so $D_1*\Delta_2$ contains this forbidden face. Therefore \eqref{eq:two-complex} fails for this order.

Moreover, both $\{e_1\}$ and $\{e_4\}$ have the local state separating the terminals. Their unions with $\{e_2,e_3\}$ are, respectively, non-NBC and NBC. Connectivity partitions alone therefore do not suffice for recognition under an interleaved edge order. This example does not prove the nonexistence of all possible block shellings.

\section{Low-degree coefficients and series--parallel computation}
\subsection{Distance, shortest paths, and the next coefficients}
In this subsection, $G$ is connected and simple. Let $d=d_G(u,v)$ and let $a$ be the number of shortest $u$--$v$ paths.
\begin{proposition}\label{prop:distance}
\begin{equation}\label{eq:distance}
J_{G;u,v}(z)=az^d+O(z^{d+1}).
\end{equation}
\end{proposition}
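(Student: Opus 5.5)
The plan is to read off the lowest coefficients of $J_{G;u,v}$ directly from its definition as a relative face enumerator. We use
\[
J_{G;u,v}(z)=\sum_{A\in\Sc_{uv}}z^{|A|}(1-z)^{n-1-|A|},
\]
where $\Sc_{uv}$ is the set of NBC faces connecting $u$ and $v$. Each summand equals $z^{|A|}$ plus terms of strictly higher degree. Hence the coefficient of $z^j$, for $j$ at most the minimum face size in $\Sc_{uv}$, is the number of faces of $\Sc_{uv}$ of size exactly $j$, and there are no lower-degree terms at all. By Theorem~\ref{thm:negami-h} the polynomial $J_{G;u,v}$ does not depend on the edge order. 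So we may fix an arbitrary order; in fact the argument below works uniformly for every order.

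\textbf{Step 1: no terms below degree $d$.} Any edge set $A$ connecting $u$ and $v$ contains a $u$--$v$ path. That path has at least $d=d_G(u,v)$ edges, so $|A|\ge d$. Consequently $J_{G;u,v}(z)\in z^d\ZZ[z]$.

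\textbf{Step 2: identify the faces of size $d$.} Let $A\in\Sc_{uv}$ with $|A|=d$. Then $A$ is a forest containing a $u$--$v$ path, which has at least $d$ edges. So $A$ equals that path, and it is a shortest $u$--$v$ path. Conversely, we must show that every shortest path $P$ is an NBC face for every edge order. This is the only point needing care.

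Suppose $P$ contained a broken circuit $C\setminus\{e\}$, where $e=\max C=xy$. Then $C\setminus\{e\}$ is an $x$--$y$ path contained in the path $P$, so it is the subpath of $P$ between $x$ and $y$. A subpath of a geodesic is a geodesic, so its length is $d_G(x,y)=1$ because $xy$ is an edge. Thus $C\setminus\{e\}$ would be a single edge joining $x$ and $y$ distinct from $e$. That is a pair of parallel edges, which is impossible since $G$ is simple.

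Hence every shortest path is NBC. It connects $u$ and $v$, so it lies in $\Sc_{uv}$. Distinct shortest paths are distinct edge sets, so the faces of size $d$ in $\Sc_{uv}$ are exactly the $a$ shortest $u$--$v$ paths.

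\textbf{Conclusion.} Combining the two steps with the opening observation, the coefficient of $z^d$ is $a$, which gives \eqref{eq:distance}.

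The main obstacle is the NBC property of geodesics in Step 2. It is exactly where simplicity of $G$ is used: with parallel edges a shortest path could contain a broken circuit. The remaining steps are bookkeeping with the factors $(1-z)^{n-1-|A|}$, which only raise degrees.
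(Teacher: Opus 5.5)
Your proposal is correct and follows the paper's proof. Both read the lowest coefficient off the face sum \eqref{eq:J-state}, observe that faces connecting $u,v$ have at least $d$ edges with equality exactly for shortest paths, and check that shortest paths are NBC. Your subpath-of-a-geodesic argument, which forces any contained broken circuit to be a single edge parallel to its deleted maximum, is a more explicit version of the paper's remark that a shortest path is induced and therefore contains no broken circuit.
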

\begin{proof}
In the face sum \eqref{eq:J-state}, every face connecting $u$ and $v$ has at least $d$ edges. Such a face with exactly $d$ edges is a shortest path. A shortest path is induced and contains no broken circuit. Hence the first coefficient counts shortest paths.
\end{proof}

Higher coefficients admit expressions in terms of small induced connected subgraphs containing $u,v$. For $W\subseteq V(G)$, let $b(W)$ be the number of NBC spanning trees of $G[W]$, with $b(W)=0$ when $G[W]$ is disconnected. For $U=V(G)\setminus W$, define
\[
\Phi_{G[U]}(z)=\sum_{A\in\BC(G[U])}z^{|A|}(1-z)^{|U|-|A|}.
\]
The reference exponent is $|U|$, rather than the usual rank in an $h$-polynomial. For the empty graph, set $\Phi=1$.

\begin{theorem}[Expansion by the component containing the terminals]\label{thm:component-J}
We have
\begin{equation}\label{eq:component-J}
J_{G;u,v}(z)=\sum_{W\supseteq\{u,v\}}
b(W)z^{|W|-1}\Phi_{G[V\setminus W]}(z).
\end{equation}
Put $\chi(U)=|U|-|E(G[U])|$, and let $\tau(U)$ be the number of triangles in $G[U]$. Let $\mathcal W_j$ consist of vertex sets $W$ containing $u,v$, of cardinality $d+1+j$, for which $G[W]$ is connected. Then
\begin{align}
[z^{d+1}]J
&=\sum_{W\in\mathcal W_1}b(W)
-\sum_{W\in\mathcal W_0}\chi(V\setminus W),\label{eq:Jnext}\\
[z^{d+2}]J
&=\sum_{W\in\mathcal W_2}b(W)
-\sum_{W\in\mathcal W_1}b(W)\chi(V\setminus W)\notag\\
&\quad+\sum_{W\in\mathcal W_0}
\left\{\binom{\chi(V\setminus W)}2-\tau(V\setminus W)\right\}.
\label{eq:Jnext2}
\end{align}
\end{theorem}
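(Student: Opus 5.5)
The plan is to prove \eqref{eq:component-J} directly from the face sum \eqref{eq:J-state}, grouping faces by the component that contains the terminals. Then I will read off the low-degree coefficients by expanding $\Phi$ to second order.

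\emph{Step 1 (the expansion).} Here $\pi$ is the one-block state $\{u,v\}$, so $r_\pi=n-1$ and
\[
J_{G;u,v}=\sum_A z^{|A|}(1-z)^{n-1-|A|},
\]
summed over NBC faces $A$ in which $u$ and $v$ are connected. For such an $A$, let $W$ be the vertex set of the component containing $u,v$, and put $U=V\setminus W$. Since $W$ is a component, $A$ contains no edge between $W$ and $U$. Hence $A=A_W\sqcup A_U$, where $A_W$ is a spanning tree of $G[W]$ and $A_U$ is a forest in $G[U]$.

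I then check, using Lemma~\ref{lem:criterion}, that $A$ is NBC in $G$ if and only if $A_W\in\BC(G[W])$ and $A_U\in\BC(G[U])$. A path in $A_{<e}$ joining the endpoints of $e$ lies in a single component of $A$. So only edges $e$ with both endpoints in $W$, or both in $U$, can violate the criterion, and for those the criterion is exactly the one in the induced subgraph.

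Conversely, any pair consisting of an NBC spanning tree of $G[W]$ and an NBC face of $G[U]$ gives such an $A$ with component set $W$. The weight factors as $z^{|W|-1}\cdot z^{|A_U|}(1-z)^{|U|-|A_U|}$, because $n-1-|A|=|U|-|A_U|$. Summing over $A_W$ gives $b(W)$, and summing over $A_U$ gives $\Phi_{G[U]}$. This proves \eqref{eq:component-J}, with $b(W)=0$ automatically killing the sets $W$ for which $G[W]$ is disconnected.

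\emph{Step 2 (low-order terms of $\Phi$).} Write $\Phi_{G[U]}=\sum_k f_k z^k(1-z)^{|U|-k}$, where $f_k$ is the number of NBC $k$-sets of $G[U]$. Since $G$ is simple, $f_0=1$ and $f_1=|E(G[U])|$. The only broken circuits of size two come from triangles, one per triangle, so $f_2=\binom{|E(G[U])|}2-\tau(U)$. Expanding in $z$ gives $[z^0]\Phi=1$ and $[z^1]\Phi=|E|-|U|=-\chi(U)$. For the next coefficient,
\[
[z^2]\Phi=\binom{|U|}2-|U||E|+\binom{|E|}2-\tau(U)=\binom{\chi(U)}2-\tau(U),
\]
using $\binom{a-b}2=\binom a2+\binom b2-ab$.

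\emph{Step 3 (collecting terms).} A set $W$ with $b(W)\neq0$ contains a $u$--$v$ path, so $|W|\ge d+1$, and $W\in\mathcal W_j$ contributes $b(W)z^{d+j}\Phi$. Extracting $[z^{d+1}]$ and $[z^{d+2}]$ gives \eqref{eq:Jnext} and \eqref{eq:Jnext2}, provided $b(W)=1$ for every $W\in\mathcal W_0$.

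This last point is the only genuinely non-formal step. Take $W\in\mathcal W_0$, so $G[W]$ is connected on $d+1$ vertices. A spanning tree $T$ of $G[W]$ has a $u$--$v$ path of length at least $d$, so that path uses all $d+1$ vertices and $T$ is itself that path. Any further edge of $G[W]$ would be a chord of this path, shortcutting it to a $u$--$v$ walk of length less than $d$ in $G$, a contradiction. Hence $G[W]$ is an induced path and $b(W)=1$. This is consistent with Proposition~\ref{prop:distance}.
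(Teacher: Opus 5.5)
Your proposal is correct and is essentially the paper's proof. Both group a terminal-connecting NBC face by the vertex set $W$ of the terminal component, and both use Lemma~\ref{lem:criterion} to show the face is NBC exactly when both pieces are. Both then expand $\Phi$ to second order and note that for $W\in\mathcal W_0$ the graph $G[W]$ is an induced shortest path, so $b(W)=1$. Your Steps 1 and 3 are complete.

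Step 2 contains an arithmetic slip that you should repair.

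\begin{itemize}
\item The term $f_1z(1-z)^{|U|-1}$ contributes $-|E|(|U|-1)$ to $[z^2]\Phi$, not $-|E||U|$. The correct intermediate value is $\binom{|U|}{2}-|E|(|U|-1)+\binom{|E|}{2}-\tau(U)$.
\item The identity you quote is false. The correct one is $\binom{a-b}{2}=\binom a2+\binom b2-ab+b$.
\end{itemize}

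The two errors cancel, so your final value $\binom{\chi(U)}2-\tau(U)$ is right, but both displayed equalities fail as written. For example, when $G[U]$ is a single edge, $\Phi=1-z$ and $[z^2]\Phi=0=\binom{1}{2}$, whereas your middle expression equals $-1$.

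Since $\chi(U)$ can be negative, you should also state that $\binom c2$ means $c(c-1)/2$, as the paper does. That convention is what lets the polynomial identity apply.
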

\begin{proof}
For an NBC face $A$ connecting $u,v$, let $W$ be the vertex set of their component. This component is an NBC spanning tree of $G[W]$, and the remainder is an arbitrary NBC face of $G[V\setminus W]$. Conversely, the union of two such faces is NBC in $G$ by Lemma~\ref{lem:criterion}: endpoints of edges between the two parts are disconnected in the selected face. The face weights now give \eqref{eq:component-J}.

If $G[U]$ has $m_U$ edges, its numbers of one-edge and two-edge NBC faces are $m_U$ and $\binom{m_U}2-\tau(U)$. Consequently,
\[
\Phi_{G[U]}(z)=1-\chi(U)z+
\left\{\binom{\chi(U)}2-\tau(U)\right\}z^2+O(z^3).
\]
If $|W|=d+1$ and $b(W)\ne0$, then $G[W]$ is itself a shortest path, so $b(W)=1$. Comparing coefficients proves the formulas. For negative integers $c$, the notation $\binom c2$ means $c(c-1)/2$.
\end{proof}

\begin{example}
For the four-cycle,
\[
J_{C_4;u,v}(z)=
\begin{cases}
z+z^2+z^3,&\text{$u,v$ are adjacent},\\
2z^2+z^3,&\text{$u,v$ are opposite}.
\end{cases}
\]
The least degree and its coefficient recover the distance and the number of shortest paths. Formulas~\eqref{eq:Jnext} and \eqref{eq:Jnext2} are signed structural expressions for higher coefficients; they do not assert termwise nonnegative counting. Nonnegativity of the resulting coefficients follows from the relative shelling.
\end{example}

\subsection{Numerical recursion for two-terminal series--parallel graphs}
Write $S$ and $P$ for series and parallel composition of two-terminal networks.
\begin{proposition}\label{prop:SP}
Set $\Psi=H_1H_2+z(H_1\widehat H_2+\widehat H_1H_2)-z(1-z)\widehat H_1\widehat H_2$. Then
\begin{equation}\label{eq:SP}
\begin{array}{c|cc}
&H&\widehat H\\\hline
S&H_1H_2&\Psi\\
P&\Psi&\widehat H_1\widehat H_2
\end{array}
\end{equation}
The initial value for a single-edge network is $(H,\widehat H)=(1,0)$.
\end{proposition}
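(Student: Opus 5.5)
We prove the four table entries separately, using only the order-independent identities \eqref{eq:H-P}, \eqref{eq:D-two}, \eqref{eq:J-two} and \eqref{eq:parallelH}. By Theorem~\ref{thm:negami-h}, $H_G$ and $\widehat H_G=H_{G/uv}$ do not depend on the edge order, so we may choose any convenient order. Throughout, write a network as $(G_i;u_i,v_i)$ and put $H_i=H_{G_i}$, $\widehat H_i=H_{G_i/u_iv_i}$.

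\textbf{Parallel composition.} Here $G=G_1\cup G_2$ with boundary $\{u,v\}$, so \eqref{eq:parallelH} gives $H_G=\Psi$ directly. For $\widehat H$, we use the identity
\[
G/uv=(G_1/uv)\cup(G_2/uv),
\]
where the two pieces are glued at the single identified vertex. A one-vertex boundary has only one partition. Theorem~\ref{thm:block} with $k=1$ (or, order-independently, the chromatic specialization of \eqref{eq:split-one} with $|\Pi(B)|=1$) then gives $H_{G/uv}=J_{G_1/uv,\{w\}}H_{G_2/uv}$. By \eqref{eq:J-N} with $|\pi|=1$, the factor $J$ equals $H_{G_1/uv}$. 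Hence the parallel $\widehat H$ is $\widehat H_1\widehat H_2$.

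\textbf{Series composition.} $G$ is obtained by gluing $G_1$ and $G_2$ at the single vertex $v_1=u_2$, and its terminals are $u_1,v_2$. The one-point gluing just used gives $H_G=H_1H_2$. For $\widehat H$, identifying $u_1$ with $v_2$ turns the series composition into a two-terminal gluing: $G/u_1v_2$ is the parallel union of $G_1$ and $G_2$ along the boundary $\{u_1=v_2,\ v_1=u_2\}$. Applying \eqref{eq:parallelH} to this gluing gives $\Psi$. The ingredients it needs are each $H_i$ and the contraction of each side along the new boundary pair, which is exactly $\widehat H_i$.

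\textbf{Base case and checks.} For a single edge, $\BC$ is a single vertex plus one edge, so $H=1$. Contracting the edge produces a loop, so $\widehat H=0$ by convention. One subtlety must be handled: \eqref{eq:parallelH} and \eqref{eq:D-two}--\eqref{eq:J-two} were derived for simple or loopless connected graphs. Contractions can create loops, where $H=0$, and can create parallel edges. We therefore need to check that the substitution $P_G=q^2N_0+qN_1$, $P_{G/uv}=q(N_0+N_1)$ remains valid in these cases. When a loop is present both sides vanish, since the chromatic polynomial of a graph with a loop is $0$.

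\textbf{Main obstacle.} The expected main obstacle is exactly this bookkeeping of degenerate cases: terminals that are already adjacent, which yield loops after contraction, and multigraphs. Rather than appealing to shellings, we would verify each table entry at the level of chromatic polynomials via \eqref{eq:H-P}, where the identities are purely algebraic and valid for multigraphs. For example, series at a cut vertex gives $P_G=P_1P_2/q$. The parallel $\widehat H$ case follows in the same way from the one-vertex gluing of the contractions.
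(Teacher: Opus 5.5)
Your proposal is correct and takes essentially the same route as the paper. A one-vertex sum gives a product. Identifying the outer terminals of a series composition gives a parallel composition, to which \eqref{eq:parallelH} applies. Identifying the terminals of a parallel composition gives a one-vertex sum of the two quotients. Loops are absorbed by the convention $\widehat H=0$. The only difference is cosmetic: you get the product formula from Theorem~\ref{thm:block} with $k=1$, or from the chromatic specialization of \eqref{eq:split-one}, while the paper observes that the broken-circuit complex of a one-vertex sum is a join. Your explicit check of the loop cases is slightly more careful than the paper's one-line remark.
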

\begin{proof}
A series composition is a one-vertex sum, so its broken-circuit complex is a join. Identifying its two outer terminals gives a parallel composition of the two networks. Formula~\eqref{eq:parallelH} applies to a parallel composition; after its terminals are identified, the result is a one-vertex sum of the two quotient graphs. The convention $\widehat H=0$ handles quotients with loops.
\end{proof}
This is a recursion for polynomial coefficients. Recursively generating the shelling itself requires additional records of facets, restriction faces, and boundary paths. Appendix~\ref{app:SP-shell} proves that such records close under composition for a specified lexicographic shelling of two-terminal series--parallel graphs.

\section{Orlik--Terao ideals and boundary composition of Gr\"obner bases}
Orient each edge and put $\ell_e=t_{\mathrm{head}(e)}-t_{\mathrm{tail}(e)}$. Define
\begin{equation}\label{eq:OT}
\begin{aligned}
I_G&=\ker\left(\QQ[y_e:e\in E(G)]\longrightarrow\QQ(t_v:v\in V(G)),\quad
y_e\longmapsto\ell_e^{-1}\right),\\
R_G&=\QQ[y]/I_G.
\end{aligned}
\end{equation}
Graphs in this section are loopless, but parallel edges are allowed. A circuit relation $\sum_{e\in C}a_e\ell_e=0$ gives the polynomial
\begin{equation}\label{eq:circuit-poly}
g_C=\sum_{e\in C}a_e\prod_{f\in C\setminus\{e\}}y_f.
\end{equation}
In the graphic case, the coefficients can be chosen in $\{1,-1\}$ according to the orientations.

\begin{theorem}[Proudfoot--Speyer \cite{ProudfootSpeyer}]\label{thm:PS}
The circuit polynomials $g_C$ form a universal Gr\"obner basis of $I_G$. For $e_1<\cdots<e_m$, use the lexicographic monomial order with $y_{e_1}\succ\cdots\succ y_{e_m}$. Then
\begin{equation}\label{eq:initial}
\inid_\prec I_G=I_{\BC(G)}
=\left\langle\prod_{e\in C\setminus\{\max C\}}y_e:C\text{ is a circuit}\right\rangle.
\end{equation}
\end{theorem}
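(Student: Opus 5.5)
We prove Theorem~\ref{thm:PS} in the classical way. First show that each $g_C$ lies in $I_G$ and has the stated leading monomial. Then compare Hilbert series to conclude that the ideal generated by these leading monomials is the whole initial ideal.

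\textbf{Step 1 (membership).} Substitute $y_e=\ell_e^{-1}$ into $g_C$ and multiply by $\prod_{f\in C}\ell_f$. Each term $a_e\prod_{f\ne e}y_f$ becomes $a_e\ell_e$. The image is therefore $\bigl(\sum_{e\in C} a_e\ell_e\bigr)/\prod_{f\in C}\ell_f$, which is $0$ by the circuit relation. Hence $g_C\in I_G$.

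\textbf{Step 2 (leading terms).} All monomials of $g_C$ are squarefree of degree $|C|-1$. Under the lexicographic order with $y_{e_1}\succ\cdots\succ y_{e_m}$, the leading monomial is the one omitting the $\prec$-smallest variable, namely $y_{\max C}$. It is $\prod_{e\in C\setminus\{\max C\}}y_e$, with coefficient $\pm1$. So the Stanley--Reisner ideal $I_{\BC(G)}$ is contained in $\inid_\prec I_G$.

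\textbf{Step 3 (Hilbert series).} Both ideals are homogeneous, and $\QQ[y]/\inid_\prec I_G$ has the same Hilbert series as $R_G$. The inclusion of Step 2 gives coefficientwise $\Hilb(\QQ[y]/\inid_\prec I_G)\le \Hilb(\QQ[y]/I_{\BC(G)})$. The right side equals $\sum_{A\in\BC(G)}\bigl(\tfrac{t}{1-t}\bigr)^{|A|}$, so for equality it suffices to show that the NBC monomials $\prod_{e\in A}y_e^{m_e}$ with $m_e\ge1$ and $A\in\BC(G)$ span $R_G$. They then span every graded piece, which gives the reverse inequality.

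Spanning follows by reducing modulo the $g_C$. If a monomial's support contains a broken circuit $C\setminus\{\max C\}$, rewrite that product as $\mp\sum_{e\ne\max C}\pm\prod_{f\in C\setminus\{e\}}y_f$ times the rest of the monomial. Each new monomial is strictly $\prec$-smaller, because it trades a larger variable $y_e$ for the smallest one $y_{\max C}$. The monomial order is a well-order, so the process terminates and expresses every monomial through NBC-supported monomials. The Hilbert series then agree, the inclusion of Step 2 is an equality, and \eqref{eq:initial} holds. The $g_C$ therefore form a Gr\"obner basis for this order.

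\textbf{Universality.} Steps 1--3 apply verbatim to every ordering of the edges. They also apply to an arbitrary term order: the leading term of $g_C$ is some $\prod_{f\ne e_0}y_f$, and this defines a broken-circuit-type complex. The step I expect to be the main obstacle is here, in the Hilbert series count for an arbitrary term order. The leading terms need not come from any edge order, because the omitted element $e_0$ is chosen circuit by circuit. The fallback is the argument of Proudfoot and Speyer \cite{ProudfootSpeyer}. It shows that $\dim (R_G)_d$ is independent of the generic data and that for any term order the standard monomials still number at most the NBC count, using the matroid's basis-exchange structure. Given the paper's explicit reliance on \cite{ProudfootSpeyer} as foundational input, I would cite that argument for universality and give the self-contained proof only for the lexicographic orders actually used.
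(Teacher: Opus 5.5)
The paper does not prove this statement; it quotes it from \cite{ProudfootSpeyer} as foundational input. Your Steps 1 and 2 are correct, but Step 3 has a genuine gap: the spanning argument gives an inequality in the same direction as the one you already have, not the reverse.

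\textbf{The gap in Step 3.} The inclusion $I_{\BC(G)}\subseteq\inid_\prec I_G$ gives $\dim(R_G)_d\le\#\{\text{NBC monomials of degree }d\}$. The fact that NBC monomials span $R_G$ gives exactly the same upper bound. Your reduction modulo the $g_C$ uses nothing about $I_G$ except that $g_C\in I_G$. For any polynomials in any ideal, the monomials not divisible by their leading terms span the quotient. If this closed the argument, every subset of an ideal, even the empty set, would be a Gr\"obner basis.

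\textbf{What is missing.} You need the lower bound $\dim(R_G)_d\ge\#\{\text{NBC monomials of degree }d\}$. Equivalently, the rational functions $\prod_{e\in A}\ell_e^{-m_e}$ with $A\in\BC(G)$ and all $m_e\ge1$ must be linearly independent, i.e.\ $\Hilb(R_G,z)=\sum_{A\in\BC(G)}\bigl(z/(1-z)\bigr)^{|A|}$. This is the non-formal content of the theorem, and no rewriting by the $g_C$ can produce it. It requires independent input, for example Terao's Hilbert series formula for Orlik--Terao algebras, or a direct independence proof by deletion--contraction. Your fallback sketch repeats the confusion: ``standard monomials number at most the NBC count'' is again the easy direction.

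\textbf{Universality is the easy part.} The step you flagged as the main obstacle is straightforward. Cancelling the common factor in a term order shows that $\prod_{f\in C\setminus\{e\}}y_f\succ\prod_{f\in C\setminus\{e'\}}y_f$ if and only if $y_{e'}\succ y_e$. So $\inid_\prec g_C$ always omits the $\prec$-smallest variable of $C$. The leading terms are therefore never chosen ``circuit by circuit''; they always come from the edge order induced on the variables. The number of NBC sets of each size is independent of the edge order. Hence universality follows from the single-order case, once the missing lower bound is supplied.
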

Thus the standard monomials are precisely those whose supports are NBC faces. For a connected graph,
\begin{equation}\label{eq:hilb}
\Hilb(R_G,z)=\Hilb(\QQ[\BC(G)],z)=\frac{H_G(z)}{(1-z)^{|V|-1}}.
\end{equation}
The circuit-polynomial construction, the universal Gr\"obner basis theorem, and degeneration to the initial ring are established results, not new assertions of this paper.

\subsection{Path polynomials and gluing along two or three vertices}
Traverse a simple path $P$ from $a$ to $b$. Let $\varepsilon_{P,e}$ be its sign relative to the fixed orientation of $e$, and set
\begin{equation}\label{eq:path-poly}
m_P=\prod_{e\in P}y_e,\qquad
q_P=\sum_{e\in P}\varepsilon_{P,e}\prod_{f\in P\setminus\{e\}}y_f.
\end{equation}
After reciprocal substitution, $q_P=(t_b-t_a)m_P$. For concatenations of edge-disjoint paths, with compatible traversal directions,
\begin{equation}\label{eq:path-series}
m_{PQ}=m_Pm_Q,\qquad q_{PQ}=q_Pm_Q+m_Pq_Q.
\end{equation}

\begin{theorem}[Composition of local bases]\label{thm:GB-glue}
Suppose $G_1,G_2$ are edge-disjoint and $2\le |B|\le3$. Fix any global term order $\prec$, and let $\Gc_i$ be a Gr\"obner basis of $I_{G_i}$ for its restriction. For every pair of internally vertex-disjoint paths $P_i\subseteq G_i$ joining the same two boundary vertices, form
\begin{equation}\label{eq:cross-poly}
g_{P_1,P_2}=q_{P_1}m_{P_2}-m_{P_1}q_{P_2}.
\end{equation}
Adjoining all these polynomials to $\Gc_1\cup\Gc_2$ gives a Gr\"obner basis of $I_{G_1\cup G_2}$. Hypothesis~\eqref{eq:A} is unnecessary.
\end{theorem}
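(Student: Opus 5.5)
The plan is to reduce everything to the Proudfoot--Speyer theorem (Theorem~\ref{thm:PS}). Since the circuit polynomials $g_C$ form a \emph{universal} Gr\"obner basis of $I_G$ for $G=G_1\cup G_2$, we have $\inid_\prec I_G=\langle \inid_\prec g_C : C \text{ a circuit of } G\rangle$ for every term order $\prec$. So it suffices to show two things. First, every element of the proposed set $\mathcal G=\Gc_1\cup\Gc_2\cup\{g_{P_1,P_2}\}$ lies in $I_G$. Second, for every circuit $C$ of $G$, the monomial $\inid_\prec g_C$ lies in $\langle \inid_\prec g: g\in\mathcal G\rangle$. Together these give $\inid_\prec I_G\subseteq\langle\inid_\prec\mathcal G\rangle\subseteq\inid_\prec I_G$, which is the Gr\"obner basis property. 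The edge order plays no role, which is why \eqref{eq:A} is not needed.

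Membership in $I_G$ is straightforward. The ideal $I_{G_i}$ is the kernel of the restricted map $\QQ[y_e:e\in E(G_i)]\to\QQ(t_v)$, so $I_{G_i}\subseteq I_G$. For the cross polynomials, reciprocal substitution gives $q_{P_i}=(t_b-t_a)m_{P_i}$ when both paths run from $a$ to $b$. Hence $g_{P_1,P_2}\mapsto (t_b-t_a)m_{P_1}m_{P_2}-m_{P_1}(t_b-t_a)m_{P_2}=0$.

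The heart of the argument is a classification of circuits, which is where $|B|\le3$ enters. A circuit $C$ of $G$ either lies in one $G_i$, or it uses edges from both sides. In the second case, cut $C$ at the boundary vertices it visits into maximal arcs, and merge consecutive arcs lying on the same side. The merged arcs alternate between $G_1$ and $G_2$ around the cycle, so there is an even number of them, at least two. Each switch of side happens at a distinct boundary vertex of $C$. With $|B|\le3$, four switches are impossible, so there are exactly two arcs: $P_1\subseteq G_1$ and $P_2\subseteq G_2$. They join the same two boundary vertices $a,b$ and are internally vertex-disjoint, though $P_1$ or $P_2$ may pass through the third boundary vertex. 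This is precisely the family of pairs indexing \eqref{eq:cross-poly}; for $|B|=4$ an alternating four-arc circuit would escape it.

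For such a crossing circuit I would check that $g_{P_1,P_2}=\pm g_C$. Reversing $P_2$ changes the signs $\varepsilon$, and by \eqref{eq:path-series} the cross polynomial becomes $\sum_{e\in C}\pm\prod_{f\in C\setminus e}y_f$, with coefficients those of the circuit relation $\sum_{e\in C}\varepsilon_{C,e}\ell_e=0$. Since circuit polynomials are determined up to scalar, $\inid_\prec g_C$ is the leading monomial of an element of $\mathcal G$. For a local circuit $C\subseteq G_i$, we have $g_C\in I_{G_i}$, and $\Gc_i$ is a Gr\"obner basis for the restricted order. Hence $\inid_\prec g_C\in\langle\inid_\prec\Gc_i\rangle$, since the restriction of $\prec$ to the variables of $G_i$ computes the same leading term.

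The step I expect to need the most care is the arc-merging classification: it must handle $P_1$ passing through the third boundary vertex, and it must check that $P_1,P_2$ meet only at $a,b$, which holds because $C$ is a simple cycle. The sign bookkeeping identifying $g_{P_1,P_2}$ with $g_C$ is routine once the traversal conventions of \eqref{eq:path-poly} are fixed.
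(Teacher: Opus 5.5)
Your proposal is correct and follows essentially the same route as the paper: reduce to the Proudfoot--Speyer universal Gr\"obner basis, use $|B|\le3$ to force exactly two side switches so that every crossing circuit is $P_1\cup P_2$ with circuit polynomial $g_{P_1,P_2}$, and cover local circuits by the leading monomials of $\Gc_1\cup\Gc_2$. The paper leaves implicit your membership check for the adjoined polynomials and your sign identification $g_{P_1,P_2}=\pm g_C$, and both of your verifications are sound.
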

\begin{proof}
Along a simple circuit using both sides, the side containing the current edge changes a positive even number of times. Switching vertices lie in $B$ and are distinct. Since $|B|\le3$, there are exactly two switches. The circuit is therefore the union of two paths as above, and its circuit polynomial is \eqref{eq:cross-poly}. The leading monomial of a circuit contained in one side is divisible by a leading monomial from the corresponding local basis. By Theorem~\ref{thm:PS}, these leading monomials generate the entire initial ideal, proving the assertion.
\end{proof}

\subsection{Virtual complete graphs and elimination for arbitrary boundaries}
\begin{theorem}[Composition and elimination using a virtual clique]\label{thm:clique}
For an arbitrary boundary $B$, add a complete graph $Q$ on $B$ whose edges have new labels. Add a distinct virtual edge even when an original edge already joins the same pair, retaining the parallel edges. The virtual variables are shared by the two sides. Extending both local ideals to the polynomial ring on all edge variables, we have
\begin{align}
I_{G+Q}&=I_{G_1+Q}+I_{G_2+Q},\label{eq:clique-sum}\\
I_G&=\bigl(I_{G_1+Q}+I_{G_2+Q}\bigr)\cap\QQ[y_e:e\in E(G)].\label{eq:elimination}
\end{align}
Neither identity requires an edge-order hypothesis.
\end{theorem}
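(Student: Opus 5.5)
Both identities will be deduced from the universal Gr\"obner basis property in Theorem~\ref{thm:PS}, applied to the graph $G+Q$ and to each $G_i+Q$.

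For \eqref{eq:clique-sum}, the inclusion $\supseteq$ is immediate. The map $y_e\mapsto\ell_e^{-1}$ on $G_i+Q$ is the restriction of the map on $G+Q$, so every element of $I_{G_i+Q}$ lies in $I_{G+Q}$.

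For $\subseteq$, Theorem~\ref{thm:PS} says $I_{G+Q}$ is generated by circuit polynomials $g_C$ of circuits $C$ of $G+Q$. It therefore suffices to show that each $g_C$ lies in $I_{G_1+Q}+I_{G_2+Q}$. I will argue by induction on the number of times $C$ switches sides, counting virtual edges as belonging to neither side.

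\begin{enumerate}
\item If $C$ lies in $G_1+Q$ or in $G_2+Q$, there is nothing to prove. A circuit consisting only of virtual edges lies in both.
\item Otherwise, $C$ contains a maximal segment $P\subseteq E(G_1)$ joining two distinct vertices $a,b\in B$. Here $a\ne b$ because $C$ is a simple circuit.
\item Let $p$ be the virtual edge $ab$, which is distinct from any original edge between $a$ and $b$. Form the circuit $C_1=P\cup\{p\}$ of $G_1+Q$, and let $C_2=(C\setminus P)\cup\{p\}$.
\item The set $C_2$ is a circuit of $G+Q$ with fewer switches, unless it is a $2$-cycle. A $2$-cycle is still a circuit here, since parallel edges are retained.
\item Combine the two pieces with the path identities \eqref{eq:path-series} and \eqref{eq:path-poly}. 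Up to sign, $g_C\cdot y_p$ should equal $g_{C_1}\cdot m_{C\setminus P}\pm g_{C_2}\cdot m_P$.
\end{enumerate}

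The obstacle is the extra factor $y_p$: this relation alone shows only $y_p g_C\in I_{G_1+Q}+I_{G_2+Q}$. I expect to remove it in one of two ways. First, I can use a degree-$2$ combination: in the Orlik--Terao algebra the relation $q_Pm_R-m_Pq_R$ for two paths $P,R$ between the same endpoints factors through $y_p$ via the two circuit relations $P\cup p$ and $R\cup p$. Concretely, $q_P y_p - m_P$ is, up to sign, the circuit polynomial of $P\cup p$. Hence modulo $I_{G_1+Q}$ we have $m_P\equiv \pm q_Py_p$, and similarly for $R$. From this, $q_Pm_R-m_Pq_R\equiv \pm(q_Pq_Ry_p-q_Pq_Ry_p)=0$ follows directly, without dividing by $y_p$. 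If the sign bookkeeping resists, the fallback is a Hilbert-series argument. The sum ideal defines the fiber product of the two Orlik--Terao algebras over $R_Q$, the modular-join situation of \cite{Denham}. Its Hilbert series can be compared with that of $I_{G+Q}$ using the join decomposition of the broken-circuit complexes, with $Q$ ordered last. This comparison relies on the clique $Q$ being modular in $G+Q$.

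For \eqref{eq:elimination}, I will show $I_{G+Q}\cap\QQ[y_e:e\in E(G)]=I_G$ and then apply \eqref{eq:clique-sum}. This intersection is the kernel of the restricted map $y_e\mapsto\ell_e^{-1}$ on the subring, which is exactly $I_G$. No edge order is involved anywhere, which settles the final sentence of the theorem.
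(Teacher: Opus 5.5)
Your argument is correct, and your first fix is the one to keep; you can drop the Hilbert-series fallback. Orient $p$ from $a$ to $b$ and traverse both $P$ and $R=C\setminus P$ from $a$ to $b$. Then $g_{C_1}=y_pq_P-m_P$, $g_{C_2}=y_pq_R-m_R$ and $g_C=q_Pm_R-m_Pq_R$, which give the exact identity
\[
g_C=q_R\,g_{C_1}-q_P\,g_{C_2},
\]
so no factor $y_p$ ever has to be cancelled.

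Two small bookkeeping points. First, the congruence $m_R\equiv y_pq_R$ holds modulo $I_{G_1+Q}+I_{G_2+Q}$ only through the inductive hypothesis on $C_2$, not modulo a single local ideal (unless $C\setminus P$ contains no edge of $G_1$). Second, if virtual edges count as belonging to neither side, the number of switches need not drop when $P$ is replaced by $p$. Induct instead on $|C\cap E(G_1)|$, which falls by $|P|\ge1$ at each step; the base case is then a circuit of $G_2+Q$.

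The paper avoids induction. It assigns virtual edges to the first side and cuts $C$ into all its maximal monochromatic segments $P_1,\dots,P_s$ at once. It then substitutes $m_{P_j}\equiv y_{p_j}q_{P_j}$ for every $j$ simultaneously, obtaining $g_C\equiv\bigl(\prod_j q_{P_j}\bigr)\sum_j\delta_j\prod_{h\ne j}y_{p_h}$. The second factor vanishes for $s=2$ and is the circuit polynomial of the cycle $p_1\cdots p_s$ in $Q$ for $s\ge4$.

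Your route is the iterated $2$-sum version of the same key congruence. It builds an explicit combination of local circuit polynomials step by step, and it never treats cycles of $Q$ separately, since they are absorbed into $I_{G_2+Q}$ at the base case. The paper's version is shorter and shows directly that the only cross-side input needed is the circuits of the clique. The elimination step is identical in both.
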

\begin{proof}
Local relations hold globally, so the right-hand side $J$ of \eqref{eq:clique-sum} is contained in the left-hand side. For the reverse inclusion, it suffices to consider circuit polynomials. Assign virtual edges to the first side, and suppose a circuit $C$ uses both sides. Decompose it into maximal monochromatic paths $P_1,\ldots,P_s$. Their switching vertices are distinct boundary vertices, and $s$ is a positive even integer.

Let $p_j$ be the virtual edge joining the endpoints of $P_j$. Define $q_{P_j}$ in the fixed direction of $p_j$, and let $\delta_j\in\{1,-1\}$ record whether the traversal of $C$ agrees with that direction. The local circuit relation between the path and its virtual edge gives
\[
m_{P_j}\equiv y_{p_j}q_{P_j}\pmod J.
\]
If $P_j$ is the virtual edge itself, the identity is immediate from $q_{P_j}=1$. Thus
\begin{align*}
g_C&=\sum_{j=1}^s\delta_jq_{P_j}\prod_{h\ne j}m_{P_h}\\
&\equiv\left(\prod_{j=1}^s q_{P_j}\right)
\left(\sum_{j=1}^s\delta_j\prod_{h\ne j}y_{p_h}\right)\pmod J.
\end{align*}
If $s=2$, then $p_1=p_2$ and $\delta_1=-\delta_2$, so the second factor vanishes. If $s\ge4$, the edges $p_1,\ldots,p_s$ form a simple circuit in $Q$, and the second factor is its circuit polynomial. Hence $g_C\in J$. Circuits contained in one side already belong to a local ideal. Theorem~\ref{thm:PS} now proves \eqref{eq:clique-sum}.

The kernel of reciprocal substitution restricted to the original edge variables is $I_G$. Eliminating the virtual variables from $I_{G+Q}$ therefore gives \eqref{eq:elimination}.
\end{proof}

\begin{corollary}
There is an algebra isomorphism
\[
R_{G+Q}\cong R_{G_1+Q}\otimes_{R_Q}R_{G_2+Q}.
\]
The union of local Gr\"obner bases need not itself be a Gr\"obner basis of the sum. A further computation in an elimination order may be necessary.
\end{corollary}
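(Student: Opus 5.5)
The first claim follows from Theorem~\ref{thm:clique} by passing to quotients. Let $S=\QQ[y_e:e\in E(G)\cup Q]$, and write $S_i$ for the polynomial ring on $E(G_i)\cup Q$ and $S_Q=\QQ[y_p:p\in Q]$. Since $Q$ is itself a graph, $I_Q\subseteq S_Q$ and $R_Q=S_Q/I_Q$. Every circuit of $Q$ is a circuit of $G_i+Q$, so $I_Q S_i\subseteq I_{G_i+Q}$, and each $R_{G_i+Q}=S_i/I_{G_i+Q}$ becomes an $R_Q$-algebra through the inclusion $S_Q\subseteq S_i$. The plan is the following chain of identifications:
\[
R_{G_1+Q}\otimes_{R_Q}R_{G_2+Q}
\cong (S_1/I_{G_1+Q})\otimes_{S_Q}(S_2/I_{G_2+Q})
\cong S/(I_{G_1+Q}S+I_{G_2+Q}S)
\cong S/I_{G+Q}=R_{G+Q}.
\]

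The first isomorphism holds because $I_Q$ already acts as zero on both factors, so tensoring over $R_Q=S_Q/I_Q$ is the same as tensoring over $S_Q$. For the second, the variable sets of $S_1$ and $S_2$ meet exactly in the virtual variables, which gives $S_1\otimes_{S_Q}S_2\cong S$. The standard formula $(A/I)\otimes_C(B/J)\cong(A\otimes_C B)/(I+J)$ then yields the quotient by the sum of the extended ideals. The third isomorphism is precisely \eqref{eq:clique-sum}. No step here needs more than functoriality, so this part should be routine.

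For the negative assertion I need an explicit example. The obstruction is the one seen in the proof of Theorem~\ref{thm:clique}. A circuit crossing the boundary lies in the sum $J$, but its polynomial is produced only through an $S$-linear combination using the congruences $m_{P_j}\equiv y_{p_j}q_{P_j}$. Its leading monomial, a product of original variables from both sides, need not be divisible by any leading monomial of the local bases. The plan is to take the smallest case: $G$ a four-cycle split into two paths of length two between terminals $u,v$, with $Q$ the single virtual edge $p=uv$. I will choose the term order so that $y_p$ is smallest, for instance an elimination order with the virtual variables last. Each $G_i+Q$ is then a triangle, and its universal Gröbner basis is its single circuit polynomial. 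Writing $P_1=\{a,b\}$ and $P_2=\{c,d\}$, the global four-cycle polynomial has leading monomial a cubic in $y_a,y_b,y_c,y_d$, such as $y_ay_by_c$ in lex order. The local leading monomials are $y_ay_b$ and $y_cy_d$, and neither divides a cubic $y_ay_by_c$ unless the order is rigged.

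The main obstacle is choosing the order so that the lex leading term of the four-cycle polynomial indeed avoids divisibility. With $y_a\succ y_b\succ y_c\succ y_d\succ y_p$, the four-cycle polynomial has leading term $y_ay_by_c$, which is divisible by $y_ay_b$. So I would instead interleave the order, $y_a\succ y_c\succ y_b\succ y_d\succ y_p$. Then the local leading monomials are $y_ay_b$ and $y_cy_d$, while the global leading monomial is $y_ay_cy_b$, which is divisible by $y_ay_b$ again. A cubic containing both variables of one side is always divisible, so divisibility is in fact forced; this is consistent with Theorem~\ref{thm:GB-glue} for $|B|=2$. I would therefore move to $|B|=4$, where a circuit alternates four times and its leading monomial can use one edge from each of several paths. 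I expect Buchberger's criterion to then exhibit an $S$-polynomial between local elements, or a crossing circuit whose initial term escapes $\langle\inid\Gc_1,\inid\Gc_2\rangle$. The elimination remark then just restates \eqref{eq:elimination}: one must compute a Gröbner basis of $J$ in an elimination order before intersecting.
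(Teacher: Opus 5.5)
Your proof of the isomorphism is correct and is the intended derivation from \eqref{eq:clique-sum}. Tensoring over $R_Q$ agrees with tensoring over $S_Q$. Next, $S_1\otimes_{S_Q}S_2\cong S$ because the two sides share only the virtual variables. Finally, the quotient by the sum of the extended ideals is $R_{G+Q}$.

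The gap is in the negative assertion. You never produce an example, and the search you propose cannot produce one. For \emph{every} term order in which each virtual variable is smaller than each original variable, $\Gc_1\cup\Gc_2$ is a Gr\"obner basis of $J=I_{G+Q}$, whatever the size of $B$. Here is why:
\begin{itemize}
\item The leading monomial of $g_C$ is $y^{C\setminus e}$, where $y_e$ is the smallest variable on $C$. It therefore omits a single edge, so your expectation that it ``can use one edge from each of several paths'' is mistaken.
\item Circuits inside one $G_i+Q$ are covered by $\Gc_i$.
\item Suppose $C$ meets original edges of both sides. Choose a maximal run $P$ of original edges of one side with $e\notin P$; if $e$ is an original edge of side $i$, take a run on the other side. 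The ends of $P$ are distinct vertices $x,y\in B$. The virtual edge $p_{xy}$ is not in $C$, by minimality of $C$. So $P\cup\{p_{xy}\}$ is a local circuit whose leading monomial is $y^{P}$, and $y^{P}$ divides $y^{C\setminus e}$.
\end{itemize}
By Theorem~\ref{thm:PS} these monomials generate $\inid J$. So moving to $|B|=4$ with the virtual variables last gives nothing. Note also that an order with the virtual variables last is not an elimination order for them. To obtain \eqref{eq:elimination}, the virtual variables must be the \emph{largest}.

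Reversing the order repairs your own four-cycle example. Use lex with $y_p\succ y_a\succ y_b\succ y_c\succ y_d$, which eliminates $y_p$. Each $I_{G_i+Q}$ is principal, generated by its triangle polynomial, with leading monomials $y_py_a$ and $y_py_c$. The four-cycle polynomial $g_{abcd}\in J$ has leading monomial $y_ay_by_c\notin\langle y_py_a,y_py_c\rangle$. Hence the union is not a Gr\"obner basis, for any choice of local bases. The $S$-polynomial of the two triangle polynomials reduces to $\pm g_{abcd}$. This is precisely the further computation in an elimination order that the statement refers to.
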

This is related to the established decomposition theory of Orlik--Terao algebras along modular flats \cite{Denham}; the proof above establishes \eqref{eq:clique-sum} directly through graphic circuits.

\subsection{The state filtration of the initial ring}
The shelling intervals give a decomposition of graded vector spaces
\begin{equation}\label{eq:Stanley}
\QQ[\BC(G)]=\bigoplus_T y^{R(T)}\QQ[y_e:e\in T].
\end{equation}
Indeed, the support of each standard monomial belongs to exactly one shelling interval. This is not a direct-product decomposition of rings.

Under \eqref{eq:A}, order the states as $\pi_1,\ldots,\pi_s$, and let $\Delta_j$ be the union of the faces in the first $j$ states. Set $\Delta_0$ equal to the void complex and $\Delta_s=\BC(G)$. Since finer states come first, each $\Delta_j$ is a subcomplex. In $R=\QQ[\BC(G)]$, put
\[
N_j=\ker(R\longrightarrow\QQ[\Delta_j]).
\]
Then $R=N_0\supseteq N_1\supseteq\cdots\supseteq N_s=0$, and
\begin{equation}\label{eq:filtration}
N_{j-1}/N_j\cong
\QQ[\Sc_{\pi_j}]\otimes_{\QQ}\QQ[K_{\pi_j}]
\end{equation}
as graded modules. The left-hand side is the relative Stanley--Reisner module $I_{\Delta_{j-1}}/I_{\Delta_j}$. Reading \eqref{eq:state-decomposition} in terms of monomial supports gives the isomorphism. Formula~\eqref{eq:global-R} further partitions the standard monomials in each quotient into intervals. Thus \eqref{eq:nonnegative} also decomposes the numerator in \eqref{eq:hilb}.

\section{Complexity: explicit bases versus coefficient computation}
\subsection{Composition cost in terms of boundary paths}
For a boundary of size two or three, let $p_i(a,b)$ be the number of simple $a$--$b$ paths in $G_i$. The number of candidate pairs is at most
\begin{equation}\label{eq:path-count}
M=\sum_{\{a,b\}\subseteq B}p_1(a,b)p_2(a,b).
\end{equation}
Given the local bases and the path lists, compatibility checking and writing the polynomials \eqref{eq:cross-poly} take $O(n^2M)$ edge-label operations, in addition to reading the lists: each polynomial has at most $n$ terms, and each term involves at most $n-1$ variables. This bound excludes the cost of generating the path lists themselves.

For a fixed edge order, let $\mathcal M$ be the inclusion-minimal broken circuits. The leading monomials of the reduced Gr\"obner basis are exactly $\{y^B:B\in\mathcal M\}$. Since $I_G$ is homogeneous,
\begin{equation}\label{eq:degree}
\max_{g\in\Gc_{\rm red}}\deg g=\max_{B\in\mathcal M}|B|
\end{equation}
when $I_G\ne0$. For a cycle $C_n$, the required degree is $n-1$.

For example, order the edges of $K_4$ by $13<24<12<23<34<14$. The broken circuit of the four-cycle $12341$ is $\{12,23,34\}$. Every broken circuit of a triangle contains $13$ or $24$, so this three-edge set is minimal, and the basis for this order requires a cubic polynomial. In contrast, an edge order arising from a suitable perfect elimination ordering gives a quadratic basis consisting of triangle polynomials. The existence of a favorable order must therefore be distinguished from the degree required by a prescribed order.

\begin{theorem}[Exponential output at treewidth two]\label{thm:exponential}
For each $k\ge2$, there is a simple graph $G_k$ with $3k+1$ vertices, $4k+1$ edges, and treewidth two, together with an edge order, such that the reduced Gr\"obner basis of its Orlik--Terao ideal has exactly $k+2^k$ elements and maximum degree $2k$. The order satisfies \eqref{eq:A} for a gluing along two vertices.
\end{theorem}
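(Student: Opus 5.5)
The plan is to take $G_k$ to be a series chain of $k$ four-cycles closed up by one extra edge, and to read the reduced basis off from the minimal broken circuits via Theorem~\ref{thm:PS}.

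\textbf{Construction.} Let $D_1,\ldots,D_k$ be four-cycles ("diamonds"). Diamond $D_i$ has opposite vertices $x_{i-1},x_i$ and two middle vertices $a_i,b_i$. Glue the diamonds in series by identifying the terminal $x_i$ of $D_i$ with the initial terminal of $D_{i+1}$. This chain $G_1$ has $3k+1$ vertices and $4k$ edges. Put $u=x_0$ and $v=x_k$. Let $G_2$ be a single edge $p=uv$, and set $G_k=G_1\cup G_2$, which has $4k+1$ edges. Since $k\ge2$, $u$ and $v$ are not adjacent in $G_1$, so $G_k$ is simple. Order the edges of $G_1$ arbitrarily and put $p$ last; this order satisfies \eqref{eq:A} for the gluing along $\{u,v\}$. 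For the treewidth: $G_k$ is a two-terminal series--parallel graph (series compositions of parallel pairs of length-two paths, then a parallel edge), so its treewidth is at most two. It contains a cycle, so its treewidth is exactly two.

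\textbf{Circuits and minimal broken circuits.} The cycle space has dimension $(4k+1)-(3k+1)+1=k+1$. A cycle avoiding $p$ lies in the series chain $G_1$. Cut vertices separate the diamonds, so such a cycle is one of the $k$ diamond four-cycles. A cycle through $p$ is $p$ together with a simple $u$--$v$ path in $G_1$. Such a path must pass through every cut vertex $x_i$ and choose one of the two sides of each diamond. This gives exactly $2^k$ circuits of length $2k+1$.

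The broken circuits are of two kinds.
\begin{itemize}
\item Each diamond circuit gives three edges of one diamond.
\item Each long circuit, since $p$ is largest, gives its $2k$-edge path in $G_1$.
\end{itemize}
The collection of broken circuits is an antichain. A long path uses exactly two edges of each diamond, so it contains no three-edge subset of a single diamond. Conversely, a three-edge diamond set cannot contain a $2k$-edge path because $2k\ge4>3$. Two distinct long paths differ in some diamond, and two distinct diamond broken circuits are distinct sets of the same size, so neither contains the other. Hence $|\mathcal M|=k+2^k$, and the maximum size of a member of $\mathcal M$ is $2k$.

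\textbf{Passing to the reduced basis.} By Theorem~\ref{thm:PS}, the initial ideal is generated by the broken-circuit monomials. Its minimal monomial generators are therefore $y^B$ for $B\in\mathcal M$, since these are squarefree monomials indexed by an antichain. A reduced Gr\"obner basis has exactly one element for each minimal generator of the initial ideal, so it has $k+2^k$ elements. Because $I_G$ is homogeneous, \eqref{eq:degree} gives maximum degree $2k$.

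\textbf{Main obstacle.} The only step needing care is the complete classification of the circuits of $G_k$. I would justify it through the cut-vertex structure of the chain $G_1$, as above, together with the dimension count as a consistency check. With that classification in hand, minimality of the broken circuits reduces to the simple size and containment comparisons already given.
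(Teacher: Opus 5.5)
Your proposal is correct and follows essentially the same route as the paper. You use the same chain of $k$ diamonds closed by a largest edge $p=uv$, the same classification into $k$ diamond four-cycles and $2^k$ long circuits, the same antichain argument via $2k\ge4>3$, and the same passage to the reduced basis via Theorem~\ref{thm:PS} and \eqref{eq:degree}, differing only in that you bound the treewidth by the series--parallel structure where the paper exhibits an explicit tree decomposition with bags $\{u,x_{i-1},x_i\}$, $\{x_{i-1},x_i,a_i\}$, $\{x_{i-1},x_i,b_i\}$.
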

\begin{proof}
Take vertices $u=x_0,x_1,\ldots,x_k=v$ and two internal vertices $a_i,b_i$ for each interval. Let $G_1$ join $x_{i-1}$ to $x_i$ by the two paths $x_{i-1}a_ix_i$ and $x_{i-1}b_ix_i$ of length two. Let $G_2$ consist of the single edge $p=uv$, and place $p$ last in the order.

The circuits of $G_1$ are exactly the $k$ local four-cycles. A circuit containing $p$ is the union of $p$ with a $u$--$v$ path $P$ in $G_1$, and there are $2^k$ such paths. Since $p$ is largest, the associated broken circuit is $P$ itself, of cardinality $2k$. Each $P$ uses only two edges from any local four-cycle, so it contains no local three-edge broken circuit. Distinct paths have the same cardinality and do not contain one another. Since $k\ge2$, no local three-edge broken circuit contains one of these paths. Thus there are exactly $k+2^k$ minimal generators, and \eqref{eq:degree} gives the maximum degree.

A tree decomposition consists of the bags $\{u,x_{i-1},x_i\}$ in a chain, with leaf bags $\{x_{i-1},x_i,a_i\}$ and $\{x_{i-1},x_i,b_i\}$ attached to the corresponding chain bag. Every bag has at most three vertices. The graph contains a cycle, so its treewidth is exactly two.
\end{proof}

\begin{figure}[htbp]\centering
\begin{tikzpicture}[scale=1,vertex/.style={circle,fill=black,inner sep=1.7pt}]
\foreach \i in {0,1,2,3}{\node[vertex] (x\i) at (2.4*\i,0) {};\node[below=4pt] at (x\i) {$x_{\i}$};}
\foreach \i/\j in {1/0,2/1,3/2}{
  \node[vertex] (a\i) at (2.4*\i-1.2,.9) {};
  \node[vertex] (b\i) at (2.4*\i-1.2,-.9) {};
  \node[above=3pt] at (a\i) {$a_{\i}$};
  \node[below=3pt] at (b\i) {$b_{\i}$};
  \draw (x\j)--(a\i)--(x\i)--(b\i)--(x\j);
}
\draw[thick] (x0) .. controls (0,2.8) and (7.2,2.8) .. node[pos=.5,above] {$p=uv\quad(\text{largest edge})$} (x3);
\node[left=5pt] at (x0) {$u$};\node[right=5pt] at (x3) {$v$};
\end{tikzpicture}
\caption{The graph of Theorem~\ref{thm:exponential} for $k=3$. Independent choices in the three intervals give eight boundary paths.}
\end{figure}

Theorem~\ref{thm:exponential} is a lower bound for explicit basis output. It does not give an exponential lower bound for a compressed representation or for computing the Hilbert series. For example, the polynomials $H_G$ of this family can be computed using \eqref{eq:SP}.

\subsection{State polynomials at bounded treewidth}
\begin{theorem}[Coefficient computation using partition states]\label{thm:DP}
Suppose a tree decomposition of width $w$ has a bag containing $B$, and let $N$ be the number of bags after conversion to a nice tree decomposition, including edge-introduction nodes. Put $b=\Bell(w+1)$ and $n=|V(G)|$. All state polynomials $N_{G,\pi}(q,-1,1)$, and hence all $J_{G,\pi}(z)$, can be computed in
\begin{equation}\label{eq:DPcost}
O(Nb^2n^2)
\end{equation}
integer arithmetic operations using naive polynomial arithmetic. For fixed width and a standard polynomial-size decomposition, this gives polynomial time in the input size.
\end{theorem}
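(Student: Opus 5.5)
We propose a standard dynamic program over a nice tree decomposition, rooted at a bag containing $B$, in which the states are partitions of the current bag. By Lemma~\ref{lem:cancel} it suffices to compute, for every $\pi\in\Pi(B)$,
\[
N_{G,\pi}(q,-1,1)=\sum_{\pi(A)=\pi}(-1)^{|A|}q^{\kappa_G(A)-|\pi|}.
\]
This is a polynomial in $q$ of degree at most $n$. The polynomials $J_{G,\pi}(z)$ then follow from \eqref{eq:J-N} by the substitution $q=(z-1)/z$ and multiplication by $(-z)^{n-|\pi|}$. After clearing denominators, this is a polynomial operation costing $O(n^2)$.

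For a node $x$ with bag $X_x$, let $G_x$ be the subgraph consisting of the vertices introduced below $x$ and the edges introduced below $x$. For each partition $\sigma$ of $X_x$ we store
\[
T_x[\sigma](q)=\sum_{A\subseteq E(G_x),\ \pi_{X_x}(A)=\sigma}(-1)^{|A|}q^{\#\{\text{components of }A\text{ avoiding }X_x\}}.
\]
This table has at most $b$ entries, each a polynomial of degree at most $n$.

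The node operations are as follows.
\begin{enumerate}
\item \emph{Leaf.} The table is the trivial one.
\item \emph{Introduce vertex $v$.} Add $\{v\}$ as a singleton block.
\item \emph{Introduce edge $uv$.} Each entry either keeps its state (the edge is absent) or goes, with a factor $-1$, to the state in which the blocks of $u$ and $v$ are merged.
\item \emph{Forget vertex $v$.} If $v$ is a singleton block, its component now avoids the bag permanently, so we multiply by $q$. Otherwise we simply delete $v$ from its block.
\item \emph{Join.} Combine $T_{x_1}[\sigma_1]$ and $T_{x_2}[\sigma_2]$ into the entry $\sigma_1\vee\sigma_2$, multiplying the polynomials. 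This is justified exactly as in the proof of Proposition~\ref{prop:split}: the components avoiding the bag add, and the boundary components combine through the join of partitions. Because the edge sets of the two children are disjoint, the signs multiply.
\end{enumerate}

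At the root, whose bag $X\supseteq B$, each state $\sigma$ of $X$ is converted as follows. The components meeting $X\setminus B$ but not $B$ contribute extra factors of $q$, and restricting $\sigma$ to $B$ gives $\pi$. In the state sum, the exponent $\kappa_G(A)-|\pi|$ counts exactly the components not meeting $B$, so these are the correct factors. We add $T[\sigma]\,q^{c(\sigma)}$ into the entry for $\pi$.

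For the cost, each node performs at most $b^2$ polynomial products (the join nodes) or $O(b)$ additions. Each product of polynomials of degree at most $n$ costs $O(n^2)$ with naive arithmetic, which gives $O(Nb^2n^2)$ in total. The partition joins themselves cost only $\mathrm{poly}(w)$, which is absorbed for fixed $w$, or can be counted separately.

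The main point to verify carefully is the join step: the count of components must be additive. This holds because each component that avoids the bag lies entirely in one child's subgraph. The separation property of tree decompositions guarantees this, since a vertex forgotten in one subtree never appears in the other. Finally, for fixed $w$ the value $b$ is constant, and a standard nice decomposition has $N=O(n+|E|)$ nodes, so the algorithm runs in polynomial time.
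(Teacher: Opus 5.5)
Your proposal is correct and uses the same partition-state dynamic program as the paper. The only difference is at the root: you restrict $\sigma$ to $B$ and multiply by $q^{c(\sigma)}$, whereas the paper adds forget nodes above the root bag until it equals $B$, and the two are equivalent. To go from the $O(Nb^2n^2)$ count of arithmetic operations to genuine polynomial time, you should also note, as the paper does, that every coefficient is a signed count of edge subsets, or a binomial combination of such counts after the substitution $q=(z-1)/z$, so all bit lengths remain polynomial in $n+|E|$.
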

\begin{proof}
For every partition $\rho$ of a bag $X$, sum the weights $(-1)^{|A|}q^{c_X(A)}$ of edge sets in the processed subgraph that induce $\rho$ on $X$. Here $c_X(A)$ counts components that no longer meet the bag. The transitions are as follows.
\begin{enumerate}
\item When a vertex is introduced, add it as a singleton block.
\item When an edge is introduced, add the contribution omitting it unchanged; for the contribution using it, merge the endpoint blocks and reverse the sign.
\item When a vertex is forgotten, multiply by $q$ if it is the last vertex of its block in the bag; otherwise simply delete it from the partition.
\item At a join node, send a pair $\rho_1,\rho_2$ to $\rho_1\vee\rho_2$ and multiply their polynomials. Components disjoint from the bag are counted additively.
\end{enumerate}
Root the decomposition at a bag containing $B$ and forget its other vertices above it, so that the final bag is $B$. The exponent at the root is then $\kappa(A)-|\pi|$, giving $N_{G,\pi}$. Degrees are at most $n$; each join has at most $b^2$ state pairs, and naive multiplication takes $O(n^2)$ operations. This proves \eqref{eq:DPcost}. Apply the change of variables \eqref{eq:J-N} at the end. Coefficients are signed counts of edge subsets and sums involving binomial coefficients, so their bit lengths are also polynomially bounded in $n+|E|$.
\end{proof}
Evaluation and computation of the Tutte polynomial at bounded treewidth have been studied previously \cite{Noble}. The point here is to extract boundary-state coefficients and distinguish their computation from explicit basis output. We do not claim that \eqref{eq:DPcost} is an optimal bound.

\section{Convex hulls of NBC spanning trees and lifts of states}
\subsection{Base polytopes and broken-circuit complexes}
The graphic base polytope is
\[
P_G=\conv\{\one_T:T\text{ is a spanning tree}\}\subseteq\RR^{E(G)}.
\]
Its edges correspond to single-edge exchanges \cite{GGMS}. Write $Q_{\NBC}(G)$ for the convex hull obtained by retaining only NBC spanning trees; in general, it differs from $P_G$. The complex $\BC(G)$ is an abstract simplicial complex whose vertices are graph edges, whereas the vertices of $Q_{\NBC}(G)$ are indicator vectors of spanning trees.

The largest edge belongs to every NBC spanning tree, so $\BC(G)$ is a cone with that edge as apex. It therefore cannot generally be identified with the boundary complex of a convex polytope. For a triangle, for example, the broken-circuit complex consists of two line segments meeting at a common endpoint and has $H=1+z$, whereas its NBC convex hull is a primitive lattice segment with Ehrhart $h^*$-polynomial $1$. These polynomials do not agree in general.

Define the state of a global NBC spanning tree $T$ to be $\pi(T\cap E(G_1))$, and put
\[
Q_\pi=\conv\{\one_T:T\text{ is an NBC spanning tree of state }\pi\}.
\]
Every component of $T\cap E(G_1)$ meets the boundary, so
\begin{equation}\label{eq:coarseheight}
|T\cap E(G_1)|=n_1-|\pi|.
\end{equation}
Thus the linear function $x(E(G_1))$ distinguishes the number of blocks, but need not distinguish states with the same number of blocks.

\begin{proposition}[An obstruction to separating states by a linear height]\label{prop:midpoint}
Even for a three-vertex gluing satisfying \eqref{eq:A}, convex hulls of distinct states may intersect. In that case a single linear height in the original edge coordinates cannot put the states in consecutive strictly separated blocks.
\end{proposition}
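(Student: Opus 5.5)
The plan is to use the gluing of Example~\ref{ex:five-numeric}, namely $G_1=K_{3,2}$ with boundary $\{u,v,w\}$ and $G_2$ a three-edge star with center $c$ and leaves $u,v,w$. Fix an edge order satisfying \eqref{eq:A}. The goal is to exhibit two NBC spanning trees of one state and two of another state whose indicator vectors have the same midpoint, i.e.
\[
\one_{T_1}+\one_{T_2}=\one_{T_3}+\one_{T_4}.
\]
Then $Q_\pi\cap Q_\sigma\neq\varnothing$. The second assertion follows at once. If a linear height $\lambda$ strictly separated the states into consecutive blocks, we could assume $\lambda>c$ on every vertex of $Q_\sigma$ and $\lambda<c$ on every vertex of $Q_\pi$. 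Evaluating $\lambda$ at the common midpoint would then give a contradiction.

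By \eqref{eq:coarseheight}, states with different numbers of blocks are already separated by $x(E(G_1))$. So the two states must have the same number of blocks, and I would take two two-block states, say $\pi_{uv}=uv|w$ and $\pi_{uw}=uw|v$.

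Label the inner vertices of $K_{3,2}$ as $a,b$. A face of state $uv|w$ in $G_1$ with $n_1-2=3$ edges is, for instance, $\{ua,va,wb\}$. A face of state $uw|v$ is, for instance, $\{ua,wa,vb\}$. Each such left part is completed on the $G_2$ side by two of the three star edges, and these must avoid creating a cycle in the quotient.

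I would look for left parts $A_1,A_2$ of state $uv|w$ and $A_3,A_4$ of state $uw|v$ with
\[
\one_{A_1}+\one_{A_2}=\one_{A_3}+\one_{A_4}.
\]
For example, take
\[
A_1=\{ua,va,wb\},\quad A_2=\{ub,vb,wa\},\qquad A_3=\{ua,wa,vb\},\quad A_4=\{ub,wb,va\}.
\]
Both sides equal the indicator of all six edges, so the multisets agree.

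On the right side, the two star edges of $T_i$ must join $c$ to one vertex in each block. For state $uv|w$ these are $\{cw\}$ together with $cu$ or $cv$. For state $uw|v$ they are $\{cv\}$ together with $cu$ or $cw$. Taking $\{cu,cw\}$ and $\{cv,cw\}$ for $T_1,T_2$, and $\{cu,cv\}$ and $\{cv,cw\}$ for $T_3,T_4$, both sums of right parts equal $\one_{cu}+\one_{cv}+2\one_{cw}$. Other mixes are also possible if needed.

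The main obstacle is the NBC condition under the chosen order. I would check it with Lemma~\ref{lem:quotient}: each $A_i$ must lie in $\BC(K_{3,2})$, and each star part must lie in $K_{\pi}=\BC(G_2/\pi)$. In $G_2/\pi$ the two star edges into a merged block are parallel, so the retained edge must be the smaller one. This forces the choice between $cu$ and $cv$ (resp.\ $cu$ and $cw$) and constrains the order on $E(G_2)$.

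On the $G_1$ side, the only circuits of $K_{3,2}$ are the three four-cycles through $a,b$. I would pick the order on $E(G_1)$ so that none of the four chosen paths contains a broken circuit. For instance, put the edges at $b$ last: each three-edge set above then either misses the maximum of each 4-cycle it could break or is not contained in any broken circuit. If a particular order fails, I would permute the order within $E(G_1)$ and within $E(G_2)$, since \eqref{eq:A} only fixes the relative order of the two sides. I would verify the four trees by direct inspection with \eqref{eq:criterion}, which Example~\ref{ex:five-numeric} makes feasible since there are only 31 facets.
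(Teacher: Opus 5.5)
Your strategy is the paper's. You glue $K_{3,2}$ to the star on $\{u,v,w\}$ and look for two NBC spanning trees of state $uv|w$ and two of state $uw|v$ with equal indicator sums. Your left parts $A_1,\dots,A_4$ are exactly the paper's, and your midpoint deduction of the second assertion is fine. The gap is on the $G_2$ side, and as written the construction fails.

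In $G_2/(uv|w)$ the edges $cu$ and $cv$ are parallel, so under any edge order only one of them can lie in an NBC face. Hence $K_{uv|w}$ has a single facet, $\{\max(cu,cv),cw\}$, and \emph{both} trees of state $uv|w$ must carry it. This is why $H_{G_2/\pi}=1$ in Example~\ref{ex:five-numeric}. So taking $\{cu,cw\}$ for $T_1$ and $\{cv,cw\}$ for $T_2$ is impossible under every order. The same holds for your choice of $\{cu,cv\}$ and $\{cv,cw\}$ in state $uw|v$, where $cu$ and $cw$ are parallel. Because the right part is forced per state, switching to other mixes does not help.

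Your rule that the retained parallel edge is the smaller one is also reversed. With the paper's convention a parallel pair $e<f$ has broken circuit $\{e\}$, so the larger edge survives. If you used your rule to make the right parts agree, you would make $cu$ the largest star edge. The actual facets would then be $\{cu,cw\}$ and $\{cu,cv\}$, and the identity would fail in the $G_2$ coordinates.

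The missing idea is that the two forced right parts must coincide: $\{\max(cu,cv),cw\}=\{\max(cu,cw),cv\}$. This holds if and only if $cu<cv$ and $cu<cw$, i.e.\ the star edge at the vertex shared by the two merged blocks is the smallest edge of $G_2$. Then all four trees have right part $\{cv,cw\}$. With the paper's labels $uc=7<vc=8<wc=9$ this gives $T_1=13689$, $T_2=24589$, $S_1=14589$, $S_2=23689$, and Lemma~\ref{lem:quotient} certifies them. The paper instead checks that each is a spanning path whose fundamental circuits all contain the larger edge $8$ or $9$.

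On the $G_1$ side you need no special order. Each $A_i$ has two components, while every broken circuit of $K_{3,2}$ is a connected three-edge path obtained from a four-cycle. So each $A_i$ is NBC for every order on $E(G_1)$, and your tentative argument about putting the edges at $b$ last is unnecessary.
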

\begin{proof}
Take $K_{3,3}$ with parts $\{u,v,w\}$ and $\{a,b,c\}$, and label its edges by
\[
\begin{array}{c|ccccccccc}
\text{label}&1&2&3&4&5&6&7&8&9\\\hline
\text{edge}&ua&ub&va&vb&wa&wb&uc&vc&wc.
\end{array}
\]
Let $G_1$ contain edges $1,\ldots,6$, let $G_2$ contain edges $7,8,9$, and use the label order. The following four edge sets are NBC spanning trees:
\[
T_1=13689,\quad T_2=24589,\qquad S_1=14589,\quad S_2=23689,
\]
where a string of digits denotes the corresponding edge set. Each is a spanning path. Adding any omitted edge creates a fundamental circuit containing a larger edge, either $8$ or $9$, so the tree is NBC. The states of $T_1,T_2$ are $uv|w$, and those of $S_1,S_2$ are $uw|v$. Moreover,
\begin{equation}\label{eq:midpoint}
\one_{T_1}+\one_{T_2}=\one_{S_1}+\one_{S_2}.
\end{equation}
The two state polytopes therefore share a midpoint. If every vertex in one state had strictly greater height than every vertex in the other, the sums of the heights on the two sides of \eqref{eq:midpoint} would differ, a contradiction. Breaking ties by state information uses data beyond a linear height in the original edge coordinates.
\end{proof}

\begin{figure}[htbp]\centering
\begin{tikzpicture}[vertex/.style={circle,fill=black,inner sep=1.7pt},scale=.93]
\foreach \v/\y in {u/2,v/1,w/0}{\node[vertex,label=left:{$\v$}] (\v) at (0,\y) {};}
\foreach \v/\y in {a/1.6,b/.4}{\node[vertex,label=right:{$\v$}] (\v) at (3,\y) {};}
\draw (u)--node[above,pos=.27] {$1$}(a);
\draw (u)--node[fill=white,inner sep=.8pt,pos=.16] {$2$}(b);
\draw (v)--node[fill=white,inner sep=.8pt,pos=.18] {$3$}(a);
\draw (v)--node[fill=white,inner sep=.8pt,pos=.34] {$4$}(b);
\draw (w)--node[fill=white,inner sep=.8pt,pos=.16] {$5$}(a);
\draw (w)--node[below,pos=.27] {$6$}(b);
\node at (1.5,-.65) {$G_1=K_{3,2}$};
\begin{scope}[xshift=6.1cm]
\foreach \v/\y in {u/2,v/1,w/0}{\node[vertex,label=left:{$\v$}] (q\v) at (0,\y) {};}
\node[vertex,label=right:{$c$}] (qc) at (2.2,1) {};
\draw (qu)--node[above] {$7$}(qc);\draw (qv)--node[above] {$8$}(qc);\draw (qw)--node[below] {$9$}(qc);
\node at (1.1,-.65) {$G_2=K_{3,1}$};
\end{scope}
\end{tikzpicture}
\caption{An example with intersecting state polytopes. Identifying the corresponding vertices $u,v,w$ on the two sides gives $K_{3,3}$.}
\end{figure}

\subsection{Cayley lifts and realization by a linear order}
Let $\mathcal P$ be the set of nonempty states, and let $e_\pi$ be the corresponding unit vectors. The polytope
\begin{equation}\label{eq:Cayley}
\mathcal C=\conv\bigcup_{\pi\in\mathcal P}(Q_\pi\times e_\pi)
\end{equation}
is the usual Cayley construction \cite{Huber}.
\begin{proposition}\label{prop:Cayley}
Each $Q_\pi\times e_\pi$ is an exposed face of $\mathcal C$. For $\lambda_\pi\ge0$ with $\sum\lambda_\pi=1$,
\[
\{x:(x,\lambda)\in\mathcal C\}=\sum_\pi\lambda_\pi Q_\pi.
\]
Under \eqref{eq:A}, let $A_\pi$ be the convex hull of indicator vectors of the local maximal forests of state $\pi$, and let $B_\pi$ be the convex hull of the facets of $K_\pi$. Then
\begin{equation}\label{eq:polyproduct}
Q_\pi=A_\pi\times B_\pi,\qquad
\#\operatorname{Vert}(Q_\pi)=J_{G_1,\pi}(1)H_{G_2/\pi}(1).
\end{equation}
\end{proposition}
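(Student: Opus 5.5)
The argument has three parts: the exposed faces, the fibers of the Cayley polytope, and the product structure under \eqref{eq:A}. Each part reduces to a standard fact, and I treat them in that order.

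For exposedness, fix $\pi\in\mathcal P$ and use the linear functional $(x,\lambda)\mapsto\lambda_\pi$ on $\RR^{E(G)}\times\RR^{\mathcal P}$. It equals $1$ on every generator $\one_T\times e_\pi$ with $T$ of state $\pi$, and $0$ on every generator $\one_T\times e_\sigma$ with $\sigma\ne\pi$. So its maximum over $\mathcal C$ is $1$. The maximizing set is the convex hull of the generators attaining $1$, which is exactly $Q_\pi\times e_\pi$. This face is nonempty because $\pi\in\mathcal P$.

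For the fiber formula, the inclusion $\supseteq$ is immediate: if $x=\sum\lambda_\pi x_\pi$ with $x_\pi\in Q_\pi$, then $(x,\lambda)=\sum_\pi\lambda_\pi(x_\pi,e_\pi)$ is a convex combination of points of $\mathcal C$. For $\subseteq$, write $(x,\lambda)$ as a convex combination $\sum_j\mu_j(\one_{T_j},e_{\pi_j})$ of generators. Reading off the $e$-coordinates gives $\lambda_\pi=\sum_{j:\pi_j=\pi}\mu_j$. If $\lambda_\pi>0$, set $x_\pi=\lambda_\pi^{-1}\sum_{j:\pi_j=\pi}\mu_j\one_{T_j}\in Q_\pi$. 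Then $x=\sum_\pi\lambda_\pi x_\pi$. States with $\lambda_\pi=0$ contribute $0\cdot Q_\pi=\{0\}$, since $Q_\pi$ is nonempty. This is the standard Cayley trick, and only this bookkeeping needs care.

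For \eqref{eq:polyproduct}, I would use the decomposition of Theorem~\ref{thm:block} together with Lemma~\ref{lem:quotient}. Under \eqref{eq:A}, the NBC spanning trees of global state $\pi$ are exactly the sets $F\cup U$ such that $F$ is a facet of $D_S$ of state $\pi$ (a local maximal forest of state $\pi$) and $U$ is a facet of $K_\pi$. Any $F$ can be paired with any $U$, and the proof of Theorem~\ref{thm:block} shows these unions are precisely the global facets of state $\pi$, each appearing once. Since $E(G_1)$ and $E(G_2)$ are disjoint, $\RR^{E(G)}=\RR^{E(G_1)}\times\RR^{E(G_2)}$ and $\one_{F\cup U}=(\one_F,\one_U)$. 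The vertex set of $Q_\pi$'s generators is therefore the Cartesian product of the generator sets of $A_\pi$ and $B_\pi$. The convex hull of a Cartesian product of finite sets is the product of the convex hulls, so $Q_\pi=A_\pi\times B_\pi$.

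For the vertex count, use two facts. First, vertices of a product of polytopes are exactly the pairs of vertices. Second, every $0/1$ point of a $0/1$-polytope is a vertex, so every generator is a vertex and distinct forests give distinct vertices. Hence $\#\operatorname{Vert}(Q_\pi)$ equals the number of facets $F$ times the number of facets of $K_\pi$. Each shelling interval contributes exactly one facet, so evaluating the shelling formulas at $z=1$ gives these counts. By \eqref{eq:J-state} the number of facets $F$ is $J_{G_1,\pi}(1)$, and by \eqref{eq:h-shell} the number of facets of $K_\pi$ is $H_{G_2/\pi}(1)$.

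The step I expect to be the main obstacle is identifying the state-$\pi$ global trees with the full product of facet sets. This needs Lemma~\ref{lem:quotient} in both directions, and the purity statements for $D_S$ and $K_\pi$ from Theorems~\ref{thm:relative} and~\ref{thm:block}.
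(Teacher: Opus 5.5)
Your proposal is correct and follows the paper's proof. As there, you expose $Q_\pi\times e_\pi$ by maximizing the state coordinate, obtain the fiber identity by grouping a convex combination by state, and get $Q_\pi=A_\pi\times B_\pi$ from the facet description $F\cup U$ in Theorem~\ref{thm:block}. You also count vertices the same way, since distinct $0$--$1$ vectors are all vertices of their convex hull. What you add is bookkeeping the paper's four-line proof leaves implicit: the $\lambda_\pi=0$ case, the rank count showing that the two pieces of a state-$\pi$ tree are facets of $\Sc_\pi$ and $K_\pi$, and evaluating the shelling formulas at $z=1$ to turn facet counts into $J_{G_1,\pi}(1)H_{G_2/\pi}(1)$.
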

\begin{proof}
Maximizing the state coordinate $y_\pi$ exposes $Q_\pi\times e_\pi$. Grouping a convex combination by state gives the fiber identity. The product description of facets in Theorem~\ref{thm:block} gives \eqref{eq:polyproduct}. Every vector in a set of distinct $0$--$1$ vectors is a vertex of their convex hull, so the number of vertices is the product of the facet counts.
\end{proof}

\begin{proposition}[Block orders from a lifted linear height]\label{prop:liftheight}
Under \eqref{eq:A}, choose the local shellings in Theorem~\ref{thm:block} from lexicographic shellings. The resulting global block-shelling order is realized by a single linear height on the vertices of $\mathcal C$. For realizing the order alone, a lift by one integer state label also suffices.
\end{proposition}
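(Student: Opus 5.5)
The plan is to build the height as a lexicographic combination of three pieces: a state term, an outer term on $E(G_1)$, and an inner term on $E(G_2)$. Each piece will be a linear function with very different scales.

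\textbf{Step 1: lexicographic shellings as linear heights.} For a fixed edge order, the lexicographic order of Proposition~\ref{prop:known-shell} places first the facet containing the largest element of the symmetric difference. This order is realized on $0$--$1$ vectors by the linear function $\sum_e 2^{\iota(e)}x_e$, where $\iota(e)$ is the position of $e$ in the order. Any two distinct facets first differ at the largest element of their symmetric difference, and the power $2^{\iota(e)}$ dominates all smaller powers combined. So "earlier" corresponds to "larger height."

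We apply this on each side separately. The outer order on the facets $F$ of $D_S$ is restricted to state $\pi$ as in Theorem~\ref{thm:relative}, and its restriction is again given by the same height. The inner order on $K_\pi$ is given by the analogous height on $E(G_2)$. The one choice that matters is to use, for every $\pi$, the single fixed weight function $\phi_1(x)=\sum_{e\in E(G_1)}2^{\iota(e)}x_e$. Its restriction to the vertices of $A_\pi$ then reproduces the lexicographic outer order for every state simultaneously. Likewise $\phi_2$ on $E(G_2)$ reproduces the inner order on each $B_\pi$. Here we use that the lexicographic shelling of the link $D_S$ is induced from a lexicographic order of $\BC(G_1+Q)$. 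Taking that order with $Q$ last, it agrees with the lexicographic order on $E(G_1)$.

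\textbf{Step 2: the combined height on $\mathcal C$.} Fix the linear extension $\pi_1,\dots,\pi_s$ of refinement used in Theorem~\ref{thm:block}, and choose integers $c_{\pi_j}$ decreasing in $j$. Define
\[
h(x,y)=M^2\sum_\pi c_\pi y_\pi+M\,\phi_1(x)+\phi_2(x),
\]
where $M$ exceeds twice the range of $\phi_2$ on $\{0,1\}^{E(G_2)}$, and $M^2$ exceeds twice the range of $M\phi_1+\phi_2$. The vertices of $\mathcal C$ are the points $(\one_{F\cup U},e_\pi)$, by Proposition~\ref{prop:Cayley} and the product structure~\eqref{eq:polyproduct}. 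On these vertices $h$ orders first by state, then by the outer facet $F$, then by the inner facet $U$. This is exactly the block order of Theorem~\ref{thm:block}. Strictness follows from the dominance inequalities, since distinct vertices differ in at least one of the three components.

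\textbf{Step 3: the integer-label lift.} For the final sentence, replace the Cayley coordinates by one extra coordinate $\ell(T)=c_\pi$, the integer position of the state, and use the same $h$ with $M^2 c_\pi$ in place of $M^2\sum c_\pi y_\pi$. The same dominance argument applies.

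\textbf{Main obstacle.} The delicate point is Step 1 for the outer factor. We must check that a single weight vector on $E(G_1)$, independent of $\pi$, realizes the relative shelling order of every state. This requires the shelling of $D_S$ to be chosen as the restriction of the lexicographic shelling of $\BC(G_1+Q)$ with the virtual edges of $Q$ placed last. Every facet of $D_S$ contains all of $Q$, so the $Q$-coordinates contribute a constant. The order on the $E(G_1)$-parts is then lexicographic in the original labels, as required. One must also confirm that this lexicographic order remains a shelling of $D_S$ for each choice of $S$, which is provided by the link statement of Proposition~\ref{prop:known-shell} together with Lemma~\ref{lem:link}.
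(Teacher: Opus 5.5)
Your proposal is correct and follows essentially the same route as the paper. The paper's height $L(x,y)=M\sum_j(s-1-j)y_{\pi_j}+\psi(x)$, with $\psi=2^{m_2}\sum_i 2^{i-1}x_{e_i}+\sum_j 2^{j-1}x_{f_j}$ and $-Mj+\psi(x)$ for the integer-label lift, is your three-scale combination of a state term, a dominant lexicographic weight on $E(G_1)$ and a lexicographic weight on $E(G_2)$. Your remark that the shelling of $D_S$ should be the restriction of the lexicographic shelling of $\BC(G_1+Q)$ with $Q$ last, so that one weight on $E(G_1)$ serves every state, makes explicit a point the paper leaves implicit.
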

\begin{proof}
List the states as $\pi_0,\ldots,\pi_{s-1}$. Write $E(G_1)=\{e_1<\cdots<e_{m_1}\}$ and $E(G_2)=\{f_1<\cdots<f_{m_2}\}$, and set
\[
\psi(x)=2^{m_2}\sum_{i=1}^{m_1}2^{i-1}x_{e_i}
+\sum_{j=1}^{m_2}2^{j-1}x_{f_j},\qquad M=2^{m_1+m_2}.
\]
Then $0\le\psi(\one_T)<M$, and decreasing $\psi$ gives the lexicographic order with $F$ outermost and $U$ innermost. Thus decreasing
\[
L(x,y)=M\sum_{j=0}^{s-1}(s-1-j)y_{\pi_j}+\psi(x)
\]
gives the required state-block order. Its shelling property follows from the interval proof of \eqref{eq:global-R}, not from linearity of the height alone. For the lift $(\one_T,j)$, use $-Mj+\psi(x)$.
\end{proof}

Two additional coordinates $(j,j^2)$ suffice to make each state an exposed face. Indeed, maximizing $2j\xi-\eta$ exposes state $j$, since its value at state $h$ is $j^2-(h-j)^2$. This asserts exposure of the states, not equality with the face lattice of the Cayley polytope.

\begin{example}[An edge of the lifted polytope that is not a single exchange]
In Proposition~\ref{prop:midpoint}, the NBC spanning trees of state $uv|w$ are
\[
12489,\quad13489,\quad13689,\quad23489,\quad24589.
\]
The candidate forests on the $G_1$ side are $123,124,134,234$ when $w$ is isolated, and $136,245$ when its component also contains an interior vertex. The set $123$ is removed because it is the broken circuit of the circuit $1234$. The only NBC basis of $G_2/\pi$ is $89$.

The linear function $3y_{uv|w}+x_5+x_6$ on $\mathcal C$ attains its maximum value $4$ precisely at $13689$ and $24589$. On all other states its value is at most $2$. Hence the segment joining these two vertices is an edge of $\mathcal C$, although the endpoints differ by three edges on each side. Its direction is not a single-exchange direction $e_i-e_j$. Thus $\mathcal C$ in these coordinates is not, in general, a matroid base polytope.
\end{example}

\subsection{Toric ideals of the lifted configuration}
Using variables $p_T$ indexed by NBC spanning trees, define
\[
I_{\mathcal A}=\ker(p_T\mapsto x^T),\qquad
I_{\mathcal C}=\ker(p_T\mapsto s_{\pi(T)}x^T).
\]
These are different from the Orlik--Terao ideal in edge variables. Equation~\eqref{eq:midpoint} gives
\[
p_{T_1}p_{T_2}-p_{S_1}p_{S_2}\in I_{\mathcal A},
\]
but this binomial is not in $I_{\mathcal C}$, since its two terms have different state degrees. The lift adds a state grading.

If $A$ is the configuration matrix and a weight comes from a linear height on its points, then $w=A^{\mathsf T}c$. Every toric relation $A\alpha=A\beta$ satisfies
\[
w\cdot\alpha=w\cdot\beta.
\]
Thus a linear height on the configuration alone cannot choose an initial term of such a binomial. Realizing a block order on the lifted configuration is different from constructing a squarefree toric initial ideal or a regular unimodular triangulation.

\section{Scope and further questions}
The constructions above establish the following assertions. NBC faces with a prescribed boundary state admit relative shellings obtained from complexes separating chosen representatives. When every edge of one side precedes every edge of the other, a one-step block composition is possible for an arbitrary boundary size. Its correction to restriction faces is a union of paths within boundary blocks, and its nonnegative generating-function decomposition agrees with the chromatic specialization of the Negami splitting identity.

The algebraic compositions do not require this edge-order hypothesis. For boundaries of size two or three, local Gr\"obner bases can be supplemented by circuit polynomials formed from paths on opposite sides. For an arbitrary boundary, adding a virtual clique and eliminating its variables composes the ideals. This algebraic closure does not guarantee computation with a small record: explicit bases can have exponential size at treewidth two, whereas state-polynomial coefficients can be computed in polynomial time at fixed treewidth.

The remaining questions can be separated as follows.
\begin{enumerate}
\item \textbf{Prescribed interleaved edge orders.} Augment connectivity partitions with order information, such as largest path edges, and find sufficient conditions for attaching states as consecutive blocks. NBC recognition and the block-shelling property require separate proofs.
\item \textbf{Closure of shelling records along tree decompositions.} Beyond the two-terminal series--parallel setting of Appendix~\ref{app:SP-shell}, find rules that update facets, restriction faces, and boundary paths of auxiliary complexes using only child records at introduction, forgetting, and join steps, and establish output-sensitive complexity bounds. The dynamic program for coefficients does not resolve this question.
\item \textbf{Toric realizations.} Determine to what extent initial ideals obtained from suitable non-affine vertex weights on the state-labeled Cayley configuration correspond to the relative state decomposition of the broken-circuit complex.
\item \textbf{A three-variable extension.} Find conditions under which the present argument, based on sign reversal at the chromatic specialization and NBC forests, extends to a nonnegative structure retaining all three variables.
\end{enumerate}
These questions are not included among the conclusions of the main theorems. The established correspondence concerns relative shellings, splitting identities, and initial rings under the stated hypotheses.

\clearpage
\appendix
\section{Notation and rank conventions}
\begin{longtable}{p{.23\textwidth}p{.70\textwidth}}
\toprule Notation & Meaning\\\midrule\endhead
$\BC(G)$ & The complex of edge sets containing no broken circuit, where the largest edge is deleted from each circuit.\\
$\pi(A)$ & The connectivity partition induced by $A$ on the boundary $B$.\\
$\Sc_\pi$ & The relative complex of NBC faces with state exactly $\pi$.\\
$D_S$ & The subcomplex of NBC faces separating all vertices of the representative set $S$.\\
$P_\pi(F)$ & The union of paths in $F$ from each block representative to the other vertices of that boundary block.\\
$K_\pi$ & The complex $\BC(G_2/\pi)$; it is void if the quotient has a loop.\\
$J_{G,\pi}$ & The relative $h$-polynomial of $\Sc_\pi$, using rank $|V(G)|-|\pi|$.\\
$H_G$ & The $h$-polynomial of $\BC(G)$ for connected $G$, using rank $|V(G)|-1$.\\
$\widehat H_G$ & The polynomial $H_{G/uv}$ after identification of two terminals; it is zero if a loop is created.\\
$N_{G,\pi}$ & The state sum in which the exponent of $t$ counts components avoiding the boundary.\\
$I_G$ & The Orlik--Terao ideal defined by reciprocals of the edge linear forms.\\
$Q_\pi$ & The convex hull of indicator vectors of NBC spanning trees of state $\pi$.\\
$\mathcal C$ & The Cayley polytope obtained by adding a state unit vector to each $Q_\pi$.\\\bottomrule
\end{longtable}
The relevant ranks and substitution are
\[
\begin{aligned}
r(G)&=n_1+n_2-k-1,\\
r(\Sc_\pi)&=n_1-|\pi|,\\
r(K_\pi)&=n_2-k+|\pi|-1,\\
q&=(z-1)/z.
\end{aligned}
\]
A relative $h$-polynomial uses the rank of its relative complex. The powers of $(1-z)$ introduced by changing to a common ambient rank must be checked separately from the operation of adding paths to restriction faces.

\section{Two-terminal recognition under interleaved orders and recursive series--parallel shellings}\label{app:SP-shell}
This appendix describes how far the two-terminal recursion closes when additional edge-order information is recorded. Label the edges by $1,\ldots,m$, and use $0$ as an auxiliary value smaller than every edge. All shellings in this appendix use the lexicographic order of Proposition~\ref{prop:known-shell}.

\subsection{NBC recognition using largest-edge records}
For an NBC face $A$ of a graph $G$ with terminals $s,t$, if $A$ connects the terminals, let $P_A$ be their unique path and put $\alpha(A)=\max P_A$. If $A$ separates the terminals, put
\begin{equation}\label{eq:beta}
\begin{split}
\beta(A)=\max\bigl(\{0\}\cup\{e\in E(G)\setminus A:
&\ A\cup\{e\}\text{ connects }s,t,\\[-2pt]
&\ e\text{ is the largest edge of that path}\}\bigr).
\end{split}
\end{equation}

\begin{proposition}\label{prop:alpha-beta}
Suppose $G_1,G_2$ are edge-disjoint and share only the terminals $s,t$. The union of local NBC faces $A_i$ is NBC exactly in the following cases:
\begin{enumerate}
\item If both sides separate the terminals, the union is always NBC.
\item If both sides connect the terminals, the union is never NBC.
\item If $A_i$ connects the terminals and $A_j$ separates them, the condition is $\alpha(A_i)>\beta(A_j)$.
\end{enumerate}
\end{proposition}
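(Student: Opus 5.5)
The plan is to use the sequential connectivity criterion of Lemma~\ref{lem:criterion} throughout. Write $A=A_1\cup A_2$. Then $A$ is NBC exactly when, for every edge $e=xy$ of $G$, the endpoints $x,y$ are disconnected in $A_{<e}$. Take $e\in E(G_i)$, so that $x,y\in V(G_i)$. I would first record one structural observation about a path in $A_{<e}$ joining $x$ to $y$. Either it stays in $(A_i)_{<e}$, or it leaves $G_i$. If it leaves, it exits through one terminal and re-enters through the other, since the sides share only $s,t$. In that case it contains an $s$--$t$ path in $(A_j)_{<e}$ for the other side $j$, and the remaining pieces are paths in $(A_i)_{<e}$ from $x$ and from $y$ to the two terminals. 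All three cases reduce to this observation.

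Case 1, where both sides separate the terminals. Neither $(A_j)_{<e}$ contains an $s$--$t$ path, so no detour through the other side exists. Connectivity of $x,y$ in $A_{<e}$ therefore coincides with connectivity in $(A_i)_{<e}$. Local NBC of $A_i$, again by Lemma~\ref{lem:criterion}, shows the union is NBC.

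Case 2, where both sides connect the terminals. The two terminal paths $P_{A_1}$ and $P_{A_2}$ are edge-disjoint and internally vertex-disjoint, so their union is a cycle contained in $A$. Deleting its largest edge leaves a broken circuit inside $A$, so the union is never NBC.

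Case 3, where $A_i$ connects and $A_j$ separates. For edges $e\in E(G_i)$, a detour would need an $s$--$t$ path in $(A_j)_{<e}$, which does not exist. Local NBC of $A_i$ therefore settles these edges. For $e=xy\in E(G_j)$, the only new way to violate the criterion is the detour. This requires that $x,y$ are disconnected in $(A_j)_{<e}$ (which holds since $A_j$ is NBC), and that one endpoint is joined to $s$ and the other to $t$ in $(A_j)_{<e}$. It also requires that $s,t$ are connected in $(A_i)_{<e}$, which means $\alpha(A_i)<e$.

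The main step is to identify the condition on $A_j$ with the set defining $\beta(A_j)$ in \eqref{eq:beta}. Such an $e$ cannot lie in $A_j$, because otherwise $A_j$ would connect the terminals. Since $e$ joins two distinct components of the forest $A_j$, the set $A_j\cup\{e\}$ contains a unique $s$--$t$ path, and that path passes through $e$. The requirement that the other edges of this path lie in $(A_j)_{<e}$ says exactly that $e$ is the largest edge of the path. Conversely, any edge in the set of \eqref{eq:beta} produces such a detour pattern. Hence a violation occurs iff some $e$ in that set satisfies $e>\alpha(A_i)$, i.e.\ iff $\beta(A_j)>\alpha(A_i)$, where the auxiliary value $0$ covers the empty set. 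Equality is impossible because the edges are distinct and $\alpha\ge1$, so the union is NBC iff $\alpha(A_i)>\beta(A_j)$. The only delicate points are the uniqueness of the path in $A_j\cup\{e\}$ and the exclusion of $e\in A_j$; I expect this to be the main care needed.
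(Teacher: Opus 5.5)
Your proof is correct and follows essentially the same route as the paper. The paper works with broken circuits of circuits that use both sides, each of which splits into an $s$--$t$ path on each side; you phrase the same decomposition through the equivalent sequential criterion of Lemma~\ref{lem:criterion}, and your identification of the detour condition with the candidate set in \eqref{eq:beta} (including $e\notin A_j$ and uniqueness of the path in $A_j\cup\{e\}$) is the paper's ``largest missing edge'' step, written out in more detail.
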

\begin{proof}
When both sides connect the terminals, the union contains a circuit. When both separate them, the union cannot contain the broken circuit of a circuit using both sides. If exactly one side connects them, any new broken circuit in the union consists of the entire terminal path on that side and a terminal path missing one edge on the separating side. The missing edge $e$ is largest in the whole circuit precisely when it is a candidate in \eqref{eq:beta} and $e>\alpha(A_i)$. Since the edge sets are disjoint, $\alpha=\beta>0$ is impossible. The criterion follows.
\end{proof}

\subsection{Local computation of restriction faces}
A facet $F$ of the terminal-separating complex $D$ is a spanning forest with two components. Define
\begin{equation}\label{eq:gamma}
\gamma(F)=\max\{e\in E(G):\text{the endpoints of $e$ lie in distinct components of $F$}\}.
\end{equation}
This set is nonempty because $G$ is connected. Let $T_i$ be an NBC spanning tree of $G_i$, and let $F_j$ be a facet of $D_j$.

\begin{proposition}\label{prop:restriction-mixed}
If $\alpha(T_i)>\beta(F_j)$, then $T=T_i\cup F_j$ is a global NBC spanning tree, and its restriction face in the lexicographic shelling is
\begin{equation}\label{eq:R-mixed}
R_G(T)=R_{G_i}(T_i)\cup R_{D_j}(F_j)
\cup\{b\in P_{T_i}:b<\gamma(F_j)\}.
\end{equation}
The facets of $K=\BC(G/st)$ are precisely the facets $F$ of $D$ with $\beta(F)=0$. Their lexicographic restriction faces equal $R_D(F)$.
\end{proposition}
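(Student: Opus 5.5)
The plan is to work throughout with the single-exchange description of restriction faces in the lexicographic shelling of Proposition~\ref{prop:known-shell}. For a facet $T$, an edge $b\in T$ lies in $R(T)$ iff $T\setminus\{b\}$ is contained in an earlier facet. Any facet containing $T\setminus\{b\}$ has the form $T'=T\setminus\{b\}\cup\{e\}$, and $T'$ is earlier exactly when $e>b$. So
\[
R(T)=\{b\in T:\ \exists\, e>b,\ e\notin T,\ T\setminus\{b\}\cup\{e\}\text{ is a facet}\}.
\]
The same description holds for $D_j$, $K$ and $\BC(G_i)$, each with its own facets.

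First I establish that $T=T_i\cup F_j$ is a global NBC spanning tree. Case~3 of Proposition~\ref{prop:alpha-beta} makes it NBC, and counting edges gives $(n_i-1)+(n_j-2)=n_1+n_2-3$, which is the global rank. I then classify every exchange $T'=T\setminus\{b\}\cup\{e\}$ with $T'$ an NBC tree. By Proposition~\ref{prop:alpha-beta}, the two sides of $T'$ cannot both connect the terminals, and by rank they cannot both separate them. So either $T'$ has the same type as $T$ ($i$-side connecting, $j$-side a facet of $D_j$), or the types are swapped. This gives three cases.
\begin{enumerate}
\item $b,e\in E(G_i)$. Then $T_i\setminus\{b\}\cup\{e\}$ is an NBC tree of $G_i$ whose new $\alpha$ must still exceed $\beta(F_j)$.
\item $b,e\in E(G_j)$. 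Then $F'=F_j\setminus\{b\}\cup\{e\}$ is a facet of $D_j$, and the condition $\alpha(T_i)>\beta(F')$ must hold.
\item $b\in P_{T_i}$ and $e\in E(G_j)$ joins the two components of $F_j$. This is the swapped type.
\end{enumerate}

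In the third case every such $e$ is at most $\gamma(F_j)$, and $e=\gamma(F_j)$ is always available. I will check that $T\setminus\{b\}\cup\{\gamma(F_j)\}$ is NBC using Proposition~\ref{prop:alpha-beta}. The $j$-side now connects the terminals with $\alpha\ge\gamma(F_j)$. The $i$-side $T_i\setminus\{b\}$ separates them, and any $\beta$-candidate $e'$ for it is an edge closing the terminal path together with $T_i\setminus\{b\}$. Such an edge is the largest on a cycle through $b$ and part of $P_{T_i}$, so NBC-ness of $T_i$ should bound $\beta(T_i\setminus\{b\})$ appropriately. From this the contribution $\{b\in P_{T_i}:b<\gamma(F_j)\}$ follows.

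For the first two cases I must show that the extra $\alpha>\beta$ constraint never destroys an exchange that exists locally. That is, whenever $b\in R_{G_i}(T_i)$, respectively $b\in R_{D_j}(F_j)$, some larger exchange remains globally admissible. Conversely, global exchanges are local exchanges, which gives the other inclusion. The argument I would try first is to take the largest admissible local $e$. In case 2, replacing $b$ by $e>b$ can only raise $\beta$ through the new edge $e$ or edges whose cut changes, and I would show $\beta(F')\le\max(\beta(F_j),\ldots)$ stays below $\alpha(T_i)$ by comparing fundamental cycles. If that fails, I would fall back on case 3, or on a different choice of $e$, to still certify $b\in R(T)$. \textbf{This compatibility step is the main obstacle}, since $\alpha$ and $\beta$ are not monotone under exchanges in general.

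For the statement about $K$, a spanning tree of $G/st$ is the image of a two-component forest $F$ separating $s,t$. By Lemma~\ref{lem:criterion}, its image is NBC iff no edge $e\notin F$ closes an $s$--$t$ path in $F\cup\{e\}$ as the largest edge, i.e.\ iff $\beta(F)=0$ (NBC-ness of $F$ itself being part of $F\in D$). For the restriction faces, exchanges in $K$ are exchanges in $D$ with $\beta=0$. I will show that if $F\setminus\{b\}\cup\{e\}$ is a $D$-facet with $e>b$ and $\beta(F)=0$, then some exchange with $e'>b$ also has $\beta=0$, again by the largest-edge choice. This gives $R_K(F)=R_D(F)$.
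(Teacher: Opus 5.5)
Your proposal has a genuine gap, in the inclusion $R_G(T)\supseteq\cdots$. You correctly reduce $R(T)$ to exchanges $T\setminus\{b\}\cup\{e\}$ with $e>b$. Your three-case classification is sound, the reverse inclusion is fine (a global exchange restricts to a local one), and so is your identification of the facets of $K$ with the facets of $D$ having $\beta=0$.

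The problem is that you never give a way to certify that a suitable exchanged tree is NBC. Cases 1 and 2, and the claim $R_K(F)=R_D(F)$, stop at the step you yourself flag as the obstacle. Moreover, your one concrete claim in case 3, that $T\setminus\{b\}\cup\{\gamma(F_j)\}$ is NBC when $b<\gamma(F_j)$, is false. Here is a counterexample:
\begin{itemize}
\item Let $G_i$ be the triangle on $s,x,t$ with $b_1=sx$, $b_2=xt$, $e=st$, and let $G_j$ be the path $s\,y\,t$ with $f_1=sy$, $f_2=yt$.
\item Order the edges $b_1<f_1<f_2<e<b_2$, and take $T_i=\{b_1,b_2\}$, $F_j=\{f_1\}$.
\item Then $\alpha(T_i)=b_2>f_2=\beta(F_j)$ and $b_1<\gamma(F_j)=f_2$.
\item Yet $\{b_2,f_1,f_2\}$ contains the broken circuit $\{f_1,f_2\}$ of the triangle $\{e,f_1,f_2\}$. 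Indeed $\beta(T_i\setminus\{b_1\})=e>f_2$, so no bound of the kind you hoped NBC-ness of $T_i$ would give can hold.
\end{itemize}
Here $b_1$ does lie in $R_G(T)$, but only through the exchange with $e$. That is exactly the lifting question you left open.

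The missing idea, which the paper invokes (citing Ziegler), is this lemma: exchanging $b$ with the \emph{largest} edge $e^*$ of its fundamental cut $C^*(T,b)$ always gives an NBC tree. The proof is short. Suppose $T\setminus\{b\}\cup\{e^*\}$ contained the broken circuit of a circuit $C$ with $f=\max C$. Since $T$ is NBC, $e^*\in C$, and since the exchanged set is a tree, $f\notin T\setminus\{b\}\cup\{e^*\}$, so $f>e^*$. Then $C\cap C^*(T,b)\subseteq\{e^*,f\}$ and it contains $e^*$. A circuit never meets a cocircuit in exactly one element, so $f\in C^*(T,b)$, contradicting the maximality of $e^*$.

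Consequently $b\in R(T)$ if and only if some edge larger than $b$ crosses the cut of $T\setminus\{b\}$. No NBC rechecking and no $\alpha/\beta$ compatibility is needed, and the proposition becomes a computation of cuts:
\begin{itemize}
\item For $b\in F_j$ or $b\in T_i\setminus P_{T_i}$, the cut isolates a vertex set avoiding both terminals. So it equals the local cut, and for $D_j$ the virtual last edge $st$ does not cross it.
\item For $b\in P_{T_i}$, the global cut is the local one together with all $G_j$-edges joining the two components of $F_j$. The maximum of those extra edges is $\gamma(F_j)$.
\item Comparing the cut of $b$ in $G/st$ with its cut relative to $F\cup\{st\}$ in $G+st$ gives $R_K(F)=R_D(F)$ in the same way.
\end{itemize}
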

\begin{proof}
In the lexicographic NBC shelling, an edge $b$ of a tree $T$ belongs to its restriction face if and only if an external edge larger than $b$ crosses the cut obtained by deleting $b$. Exchange with the largest edge of a fundamental cut preserves the NBC property \cite{Ziegler}. For $D$, the same exchange criterion applies by regarding it as the link of a largest virtual edge.

If $b\in F_j$ is deleted, one of the resulting smaller components avoids the boundary. Global exchanges therefore coincide with exchanges in the local representative-separating forest. The same argument applies to $b\in T_i\setminus P_{T_i}$. If $b\in P_{T_i}$, deleting it separates the terminals. In addition to exchange candidates on the first side, all edges crossing the two components of $F_j$ become candidates. Their largest value is $\gamma(F_j)$, proving \eqref{eq:R-mixed}.

After terminal identification, a new violation of the NBC criterion can occur exactly when an edge joining the two terminal components is a candidate in \eqref{eq:beta}. Thus a facet of $D$ is a facet of $K$ exactly when $\beta=0$. Its fundamental cuts after identification agree with the cuts obtained by adding a virtual edge to $F$ and deleting a real edge. The restriction faces consequently agree as well.
\end{proof}

\subsection{Record updates for series--parallel networks}
For every NBC spanning tree, record $(T,R,P,\alpha)$; for every maximal terminal-separating spanning forest, record $(F,R,\beta,\gamma)$. For a single edge $e$, the initial records are
\[
(\{e\},\varnothing,\{e\},e),\qquad
(\varnothing,\varnothing,e,e).
\]
Write $\mathsf T$ and $\mathsf F$ for the collections of tree and forest records.

\begin{theorem}[A closed record recursion for two-terminal series--parallel graphs]\label{thm:SP-record}
The following updates recursively generate all NBC spanning trees, all maximal terminal-separating forests, and their lexicographic restriction faces for a two-terminal series--parallel graph with any edge order.

For a series composition, form trees from $\mathsf T_1\times\mathsf T_2$ by
\[
T=T_1\cup T_2,\quad R=R_1\cup R_2,\quad
P=P_1\cup P_2,\quad \alpha=\max(\alpha_1,\alpha_2).
\]
Form forests from $\mathsf T_i\times\mathsf F_j$ for $\{i,j\}=\{1,2\}$. For records $(T,R_T,P,\alpha)$ and $(F,R_F,\beta,\gamma)$, set
\begin{equation}\label{eq:SP-series-record}
\begin{split}
F'&=T\cup F,\qquad
R'=R_T\cup R_F\cup\{b\in P:b<\gamma\},\\
\beta'&=\begin{cases}\beta,&\beta>\alpha,\\0,&\beta\le\alpha,\end{cases}
\qquad\gamma'=\gamma.
\end{split}
\end{equation}

For a parallel composition, form forests from $\mathsf F_1\times\mathsf F_2$ by
\[
F=F_1\cup F_2,\quad R=R_1\cup R_2,\quad
\beta=\max(\beta_1,\beta_2),\quad\gamma=\max(\gamma_1,\gamma_2).
\]
For trees, retain only the pairs in $\mathsf T_i\times\mathsf F_j$ with $\alpha>\beta$, and set
\begin{equation}\label{eq:SP-par-record}
\begin{aligned}
T'&=T\cup F,& R'&=R_T\cup R_F\cup\{b\in P:b<\gamma\},\\
P'&=P,&\alpha'&=\alpha.
\end{aligned}
\end{equation}
Sorting each resulting collection in the global lexicographic order gives a shelling of the corresponding complex.
\end{theorem}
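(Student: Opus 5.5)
We argue by structural induction on the series--parallel decomposition. The invariant for a network $G$ with terminals $s,t$ has two parts. First, $\mathsf T$ lists each NBC spanning tree $T$ of $G$ exactly once, together with its lexicographic restriction face $R_G(T)$, its terminal path $P_T$ and $\alpha(T)$. Second, $\mathsf F$ lists each facet $F$ of $D$ exactly once, together with $R_D(F)$, $\beta(F)$ from \eqref{eq:beta} and $\gamma(F)$ from \eqref{eq:gamma}. The base case of a single edge $e$ is immediate. The tree $\{e\}$ has empty restriction face, path $\{e\}$ and $\alpha=e$. The forest $\varnothing$ has $\beta=e$, since adding $e$ connects the terminals with $e$ largest, and $\gamma=e$. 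The final sorting claim needs no separate argument: once the restriction faces agree with those of the lexicographic shelling of Proposition~\ref{prop:known-shell}, the lexicographic order on $\BC(G)$, and on $D$ viewed as a link, is a shelling with these restriction faces.

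For the \emph{parallel} step, trees come from Proposition~\ref{prop:alpha-beta}. A global NBC spanning tree must connect $s,t$ on exactly one side, so both sides connecting is excluded. Counting ranks shows that the other side contributes a maximal separating forest. The condition $\alpha>\beta$ is exactly case~3, so each tree arises once. The recorded restriction face is \eqref{eq:R-mixed}, and the path and $\alpha$ are unchanged. For forests $F_1\cup F_2$, a union of two separating forests is NBC by case~1. Deleting an edge leaves a smaller component that avoids the boundary, so exchanges are local and $R=R_1\cup R_2$. An edge added to $F_1\cup F_2$ joins the two terminal components only if it does so on its own side; hence $\gamma=\max(\gamma_1,\gamma_2)$ and $\beta=\max(\beta_1,\beta_2)$, using the characterization in \eqref{eq:beta}.

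For the \emph{series} step the middle vertex $c$ is not a terminal, and $\BC$ of a one-vertex sum is a join. So trees are pairs of trees, $R$, $P$ are unions, and $\alpha$ is a maximum. A separating facet of the series network separates $s$ from $t$ on exactly one side: take $T_i$ a tree on side $i$ and $F_j$ separating on side $j$. To compute the restriction face, view $D$ as $\lk_{\BC(G+Q)}Q$ via Lemma~\ref{lem:link}, with the virtual edge $st$ largest.
\begin{enumerate}
\item Deleting an edge of $T_i$ off $P$, or any edge of $F_j$, gives local exchanges.
\item Deleting $b\in P$ cuts $T\cup F$ so that all crossing edges of $F_j$ become candidates, adding $\{b\in P:b<\gamma\}$.
\end{enumerate}
This is the same computation as in Proposition~\ref{prop:restriction-mixed}, now with the roles inside $D$. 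The new $\gamma'$ is $\gamma$, because edges crossing between the two components of $T\cup F$ are exactly the crossing edges of $F_j$. For $\beta'$: an edge $e$ added to $F_j$ makes the global $s$--$t$ path equal to $P\cup(\text{local path through }e)$. This $e$ is the largest edge on that path iff $e>\alpha$, which gives $\beta'=\beta$ if $\beta>\alpha$ and $0$ otherwise. The maximizing candidate is still $\beta$, because the candidates on side $j$ are compared against the single constant $\alpha$.

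The main obstacle is verifying that the restriction-face formula is exactly right in the series forest case. One must check that the lexicographic exchange criterion for the link $D$ treats the virtual largest edge correctly. One must also confirm that no candidate external edge from side $i$ crossing the cut of $b\in P$ is larger than those already accounted for by $R_T$. Here we intend to rely on the exchange characterization quoted in the proof of Proposition~\ref{prop:restriction-mixed}. We will check the maxima case by case: edges in $T_i$'s own fundamental cut versus edges of $F_j$ crossing its two components.
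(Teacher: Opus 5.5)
Your proposal is correct and takes essentially the same route as the paper's proof. Both use structural induction on the same tree and forest records, Propositions~\ref{prop:alpha-beta} and \ref{prop:restriction-mixed} for the parallel trees, locality of exchanges away from the terminal path, and the description of $D$ as a link of a largest virtual edge for the series forests. The worry in your last paragraph goes away at once: with the virtual edge $st$ in the tree, the side-$i$ edges crossing the cut of $b\in P$ are exactly the local fundamental cocircuit of $b$ in $T_i$, which is what decides $b\in R_T$, and the only other crossing edges are the crossing edges of $F_j$, whose maximum is $\gamma$.
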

\begin{proof}
A spanning tree of a series composition is a pair of local spanning trees. A terminal-separating spanning two-forest consists of a spanning tree on one side and a terminal-separating two-forest on the other. The global terminal path is the concatenation of local terminal paths. Only edges on the forest side cross the forest cut, so $\gamma'=\gamma$. A candidate edge for $\beta$ is largest on the completed global terminal path if and only if it also exceeds the largest edge $\alpha$ of the tree-side terminal path. This gives the formula for $\beta'$ in \eqref{eq:SP-series-record}.

For a series-composition forest, test the restriction face in the tree obtained by adjoining a largest virtual edge. Deleting an edge on the tree-side terminal path exposes exchange candidates across the forest-side terminal cut. Away from the terminal path, the exchange test is local. This proves the formula for $R'$. For a series-composition tree, there are no additional exchange candidates, so its restriction face is simply the union of the local restriction faces.

A terminal-separating two-forest in a parallel composition is a pair of local terminal-separating two-forests. Each completing path and each cut edge belongs to one side, so $\beta$ and $\gamma$ are the maxima of the local values. Deleting an edge of either forest separates off a component avoiding the boundary, and exchanges into this component occur only on the same side. Hence the restriction face is the union of the local ones. For spanning trees, Propositions~\ref{prop:alpha-beta} and \ref{prop:restriction-mixed} give \eqref{eq:SP-par-record}. These classifications exhaust the spanning trees and spanning two-forests without repetition, since the edge sets of the sides are disjoint. The computed restriction faces agree with the lexicographic ones, proving the final assertion.
\end{proof}

\begin{corollary}[Output-sensitive polynomial bound]\label{cor:SP-output}
Given a series--parallel decomposition tree, all records in Theorem~\ref{thm:SP-record} can be generated explicitly in time polynomial in the sum of the input size and the number of final output records. This does not assert polynomial time in the input size alone.
\end{corollary}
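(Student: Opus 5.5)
The plan is to induct along the series--parallel decomposition tree, generating at each node exactly the records produced by Theorem~\ref{thm:SP-record}. I will then bound the total work by a polynomial in the input size and the number of records that survive to the root. Each record has size $O(m)$, since it consists of at most four edge sets and two labels, and each update formula in \eqref{eq:SP-series-record} and \eqref{eq:SP-par-record} costs $O(m)$ or $O(m\log m)$ operations on edge labels. The cost at a node is therefore $O(m)$ times the number of candidate pairs examined there, plus a final sort of the output records at the root costing $O(m\cdot N\log N)$. Here $N$ is the number of final records.

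The key step is to control the number of records at intermediate nodes and the number of discarded pairs. For intermediate records I plan to use monotonicity. Every local NBC spanning tree or maximal terminal-separating forest of a child network extends to at least one record of the parent. A series composition always pairs a tree with a tree, or a tree with a forest. In a parallel composition, forests pair with forests unconditionally. The tree records of a parallel node come from pairs with $\alpha>\beta$; among the local forests we can pair with one having $\beta=0$, which is a facet of $K$, provided such a forest exists. The next paragraph treats the case where it does not.

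This propagation step is the expected obstacle. When the other side has no forest with $\beta=0$, or has no forest at all, a local tree may die. I would handle this by a weaker bound. The number of child records is at most the number of candidate pairs at the parent, so pairs with the empty side are impossible. Each child record appears in at least one surviving parent record of one of the two types, tree or forest, because a parallel composition always keeps every forest--forest pair and every tree--forest pair has the forest side surviving. Thus $|\mathsf T_i|+|\mathsf F_i|\le |\mathsf T'|+|\mathsf F'|$ up to the factor of how many pairs share a child record, and inductively every intermediate collection is at most the final output size $N$.

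For the filtering cost in the parallel case, I would avoid testing all pairs naively. Sorting $\mathsf F_j$ by $\beta$ lets us enumerate, for each tree record, exactly the forests with $\beta<\alpha$, at cost $O(\log|\mathsf F_j|)$ plus the output. Summing over the $O(m)$ nodes of the decomposition tree then gives time polynomial in $m+N$. This matches the statement, which claims nothing in terms of input size alone.
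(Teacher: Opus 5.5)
Your proof has a genuine gap in the propagation step, which is exactly the step that controls the running time.

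You assert that each child record appears in at least one surviving parent record. The reason you give covers only child \emph{forest} records: forest--forest pairs are always kept, and the forest side of every tree--forest pair survives. At a parallel node, a child tree record $(T,R_T,P,\alpha)\in\mathsf T_1$ appears only in pairs with some $F\in\mathsf F_2$ satisfying $\alpha>\beta(F)$, and there may be no such $F$. For example, if $G_2$ is a single edge $e$ larger than every edge of $G_1$, then $\mathsf F_2=\{(\varnothing,\varnothing,e,e)\}$ and every tree of $G_1$ is discarded, as you noted one sentence earlier. So your argument does not justify $|\mathsf T_i|+|\mathsf F_i|\le|\mathsf T'|+|\mathsf F'|$. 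Nothing else in the proposal bounds the intermediate tree collections in terms of $N$. Your sorted-by-$\beta$ enumeration removes the cost of discarded pairs, but not the cost of building possibly large intermediate collections.

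The missing idea is to propagate only forests and to bound trees by forests.

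\begin{itemize}
\item \textbf{Forest records always survive.} At a series node, every $F\in\mathsf F_j$ pairs unconditionally with each record of $\mathsf T_i\neq\varnothing$. At a parallel node, $\mathsf F_1\times\mathsf F_2$ is unfiltered and $\mathsf F_j\neq\varnothing$. Hence the forest count at every node is at most the root forest count, which is at most $N$.
\item \textbf{Trees are controlled by forests.} Deleting the largest edge $b$ of the terminal path of an NBC spanning tree $T$ leaves a maximal terminal-separating NBC forest $T\setminus\{b\}$. The map $T\mapsto(T\setminus\{b\},b)$ is injective. So $t_v\le m_vf_v$ at a node whose network has $m_v$ edges; in fact $t_v\le f_v$, by \eqref{eq:D-two} at $z=1$.
\end{itemize}

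Every node therefore carries at most $(m+1)N$ records, and even naive testing of all child pairs is polynomial in $m+N$. This is the paper's argument. Once this bound is in place, the rest of your bookkeeping (record sizes, $O(m)$ decomposition nodes, the final sort) goes through.
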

\begin{proof}
Let $t_i,f_i$ denote the numbers of tree and forest records of a subnetwork. Both are at least one. For either composition, the parent's forest count is at least the forest count of each child. Moreover, deleting an edge from the terminal path of an NBC spanning tree gives a terminal-separating NBC forest. If the subnetwork has $m_i$ edges, each such forest has at most $m_i$ possible one-edge extensions. Thus $t_i\le m_i f_i$.

Let $f$ be the forest count at the root and $m$ the total number of edges. Every node has at most $(m+1)f$ records. At each node, inspect pairs of child records and update edge sets, paths, and largest-edge data in polynomial time per pair. Including the number of decomposition nodes in the input gives a polynomial bound in the final record count and the input size.
\end{proof}

This recursion concerns the records of the specified lexicographic shelling. It does not assert preservation of arbitrary local shellings or consecutive blocks indexed by $(\alpha,\beta)$. Nor does it imply closure of the same finite collection of record types for general tree decompositions with larger boundaries.


\begin{thebibliography}{99}
\small\raggedright
\bibitem{Negami}
S. Negami, Polynomial invariants of graphs,
\emph{Trans. Amer. Math. Soc.} \textbf{299} (1987), no.~2, 601--622.
\href{https://doi.org/10.1090/S0002-9947-1987-0869224-1}{doi:10.1090/S0002-9947-1987-0869224-1}.

\bibitem{NegamiKawagoe}
S. Negami and K. Kawagoe, Polynomial invariants of graphs with state models,
\emph{Discrete Appl. Math.} \textbf{56} (1995), no.~2--3, 323--331.
\href{https://doi.org/10.1016/0166-218X(94)00093-S}{doi:10.1016/0166-218X(94)00093-S}.

\bibitem{Bjorner}
A. Bj\"orner, The homology and shellability of matroids and geometric lattices,
in N. White (ed.), \emph{Matroid Applications}, Encyclopedia Math. Appl.~40,
Cambridge University Press, Cambridge, 1992, 226--283.

\bibitem{Ziegler}
G. M. Ziegler, Matroid shellability, $\beta$-systems, and affine hyperplane arrangements,
\emph{J. Algebraic Combin.} \textbf{1} (1992), 283--300.
\href{https://www.mi.fu-berlin.de/math/groups/discgeom/ziegler/Preprintfiles/030PREPRINT.pdf}{Author's preprint}.

\bibitem{ProudfootSpeyer}
N. Proudfoot and D. Speyer, A broken circuit ring,
\emph{Beitr. Algebra Geom.} \textbf{47} (2006), no.~1, 161--166.
\href{https://arxiv.org/abs/math/0410069}{arXiv:math/0410069}.

\bibitem{Denham}
G. Denham, M. Garrousian, and \c{S}. O. Toh\u{a}neanu,
Modular decomposition of the Orlik--Terao algebra,
\emph{Ann. Comb.} \textbf{18} (2014), no.~2, 289--312.
\href{https://doi.org/10.1007/s00026-014-0223-z}{doi:10.1007/s00026-014-0223-z}.

\bibitem{Burgos}
J. M. Burgos, Singularities in Negami's splitting formula for the Tutte polynomial,
\href{https://arxiv.org/abs/1605.05499v3}{arXiv:1605.05499v3}, 2017.

\bibitem{Noble}
S. D. Noble, Evaluating the Tutte polynomial for graphs of bounded tree-width,
\emph{Combin. Probab. Comput.} \textbf{7} (1998), no.~3, 307--321.

\bibitem{GGMS}
I. M. Gel'fand, R. M. Goresky, R. D. MacPherson, and V. V. Serganova,
Combinatorial geometries, convex polyhedra, and Schubert cells,
\emph{Adv. Math.} \textbf{63} (1987), 301--316.
\href{https://doi.org/10.1016/0001-8708(87)90059-4}{doi:10.1016/0001-8708(87)90059-4}.

\bibitem{Huber}
B. Huber, J. Rambau, and F. Santos,
The Cayley Trick, lifting subdivisions and the Bohne--Dress theorem on zonotopal tilings,
\emph{J. Eur. Math. Soc.} \textbf{2} (2000), no.~2, 179--198.
\href{https://ems.press/journals/jems/articles/113}{Publisher's article page}.
\end{thebibliography}
\end{document}